\documentclass{article}
\usepackage{fullpage,url}
\usepackage{amsmath,amssymb,amsthm}
\newtheorem{theorem}{Theorem}
\newtheorem{proposition}{Proposition}
\newtheorem{lemma}{Lemma}
\theoremstyle{remark}
\newtheorem{remark}{Remark}
\newtheorem{example}{Example}

\usepackage{color}

\title{Polynomial Expressions of Carries in p-ary Arithmetics}
\author{Shizuo Kaji${}^1$, Toshiaki Maeno${}^2$, Koji Nuida${}^*{}^3{}^4$, Yasuhide Numata${}^5$}
\date{${}^1$ Yamaguchi University, Japan (\url{skaji@yamaguchi-u.ac.jp}) \\%
${}^2$ Meijo University, Japan (\url{tmaeno@meijo-u.ac.jp}) \\%
${}^3$ National Institute of Advanced Industrial Science and Technology (AIST), Japan (\url{k.nuida@aist.go.jp}) \\%
${}^4$ Japan Science and Technology Agency (JST) PRESTO Researcher \\%
${}^5$ Shinshu University, Japan (\url{nu@math.shinshu-u.ac.jp}) \\%
${}^*$ Corresponding author}

\newcommand{\Zplus}{\mathop{+_{\mathbb{Z}}}}
\newcommand{\Zminus}{\mathop{-_{\mathbb{Z}}}}
\newcommand{\Ztimes}{\mathop{\times_{\mathbb{Z}}}}
\newcommand{\Zsum}[2]{\sum_{#1}^{#2}{}_{\!\mathbb{Z}}\,}
\newcommand{\Zprod}[2]{\prod_{#1}^{#2}{}_{\!\mathbb{Z}}\,}

\begin{document}
\maketitle
\begin{abstract}
It is known that any $n$-variable function on a finite prime field of characteristic $p$ can be expressed as a polynomial over the same field with at most $p^n$ monomials.
However, it is not obvious to determine the polynomial for a given concrete function.
In this paper, we study the concrete polynomial expressions of the carries in addition and multiplication of $p$-ary integers.
For the case of addition, our result gives a new family of symmetric polynomials, which generalizes the known result for the binary case $p = 2$ where the carries are given by elementary symmetric polynomials.
On the other hand, for the case of multiplication of $n$ single-digit integers, we give a simple formula of the polynomial expression for the carry to the next digit using the Bernoulli numbers, and show that it has only $(n+1)(p-1)/2 + 1$ monomials, which is significantly fewer than the worst-case number $p^n$ of monomials for general functions.
We also discuss applications of our results to cryptographic computation on encrypted data.
%
\end{abstract}


\ \\
\noindent
{\large \textbf{Remark.}}
The authors are notified that the essential part of our Theorem \ref{thm:carry_for_p-ary_multiplication__in_introduction} appears (by a different approach) in: C.~Sturtivant, G.~S.~Frandsen, The Computational Efficacy of Finite-Field Arithmetic, Theoretical Computer Science \textbf{112} (1993) 291--309 (see Theorem 9.1(a) and Theorem 11.2 in that paper).
The authors deeply thank Akihiro Munemasa for the information.
The authors would like to keep this preprint online for reference purposes.

\section{Introduction}
\label{sec:intro}

A well-known but remarkable property of finite prime field $\mathbb{F}_p$ (where $p$ is a prime) is that, any function that computes a value in $\mathbb{F}_p$ from a tuple of elements of $\mathbb{F}_p$ can be expressed as a polynomial over $\mathbb{F}_p$.
Such a polynomial expression of a function can be taken to be of degree at most $p-1$ with respect to each variable (we call it a \lq\lq minimal polynomial expression''), hence the polynomial in $n$ variables consists of at most $p^n$ monomials and has total degree at most $n(p-1)$ in general.
Here we emphasize that, besides the general theory that guarantees the \emph{existence} of the minimal polynomial expression, it is of its own interest to determine such a \emph{concrete} expression of a given function, which may have a significantly smaller number of monomials than the general bound $p^n$ and/or a significantly lower total degree than the general bound $n(p-1)$.
In this paper, we study the explicit polynomial expressions of the carry functions in $p$-ary arithmetics (precisely, addition and multiplication of $p$-ary integers).
We also discuss applications to computation on encrypted data studied in cryptology, from which the present work is originally motivated.

\subsection{Our Problem and Results}
\label{subsec:intro_summary}

More precisely, we consider the following problem.
For $a \in \mathbb{F}_p$, we define $a_{\mathbb{Z}} \in \mathbb{Z}$ to be the representative of the residue class $a \in \mathbb{F}_p = \mathbb{Z}/p\mathbb{Z}$ chosen from the subset $[p-1] := \{0,1,\dots,p-1\}$ of $\mathbb{Z}$.
We sometimes write the addition, the subtraction and the multiplication operators in $\mathbb{Z}$ as $\Zplus$, $\Zminus$ and $\Ztimes$, respectively, for clarifying the distinction between the operators in $\mathbb{Z}$ and in $\mathbb{F}_p$.
We also use the symbols $\Zsum{}{}$ and $\Zprod{}{}$ in a similar manner.
Then we define functions $\varphi_i \colon (\mathbb{F}_p)^n \to \mathbb{F}_p$ for $i = 0,1,\dots$ by the following relation for $x_1,\dots,x_n \in \mathbb{F}_p$:
\begin{equation}
\label{eq:definition_for_carry_function_for_addition}
\Zsum{j=1}{n} (x_j)_{\mathbb{Z}} = \Zsum{i \geq 0}{} \bigl( \varphi_i(x_1,\dots,x_n)_{\mathbb{Z}} \Ztimes p^i \bigr) \enspace,
\end{equation}
i.e., the $p$-ary expression of the integer $(x_1)_{\mathbb{Z}} \Zplus \cdots \Zplus (x_n)_{\mathbb{Z}}$ is $(\dots,\varphi_1(x_1,\dots,x_n)_{\mathbb{Z}},\varphi_0(x_1,\dots,x_n)_{\mathbb{Z}})_p$.
For example, $\varphi_0(x,y)$ and $\varphi_1(x,y)$ represent the sum and the carry, respectively, for the $p$-ary addition of two single-digit values $x$ and $y$ (where the $p$-ary digits are naturally identified with elements of $\mathbb{F}_p$).
Similarly, we define functions $\psi_i \colon (\mathbb{F}_p)^n \to \mathbb{F}_p$ for $i = 0,1,\dots$ by the following relation for $x_1,\dots,x_n \in \mathbb{F}_p$:
\begin{equation}
\label{eq:definition_for_carry_function_for_multiplication}
\Zprod{j=1}{n} (x_j)_{\mathbb{Z}} = \Zsum{i \geq 0}{} \bigl( \psi_i(x_1,\dots,x_n)_{\mathbb{Z}} \Ztimes p^i \bigr) \enspace,
\end{equation}
i.e., the $p$-ary expression of the integer $(x_1)_{\mathbb{Z}} \Ztimes \cdots \Ztimes (x_n)_{\mathbb{Z}}$ is $(\dots,\psi_1(x_1,\dots,x_n)_{\mathbb{Z}},\psi_0(x_1,\dots,x_n)_{\mathbb{Z}})_p$.
In this setting, our problem is to determine the concrete minimal polynomial expressions of the functions $\varphi_i$ and $\psi_i$.
We note that, the definitions of $\varphi_i$ and $\psi_i$ imply immediately that
\begin{displaymath}
\varphi_0(x_1,\dots,x_n) = x_1 + \cdots + x_n \mbox{ and } \psi_0(x_1,\dots,x_n) = x_1 \cdots x_n
\end{displaymath}
(we emphasize that the right-hand sides are computed in $\mathbb{F}_p$ rather than $\mathbb{Z}$).
In the following argument, we focus on the other cases when $i \geq 1$.
We also note that, when $p = 2$, we have $\psi_i = 0$ for any $i \geq 1$ (since now $(x_j)_{\mathbb{Z}} \in \{0,1\}$).
In the following argument, we assume $p > 2$ for the case of multiplication operators.

For the carry functions $\varphi_i$ in the addition operators, when $p = 2$, a simple solution of the problem using elementary symmetric polynomials has been derived, e.g., by Boyar, Peralta and Pochuev \cite{BPP00} (see also Example \ref{ex:symmetric_polynomial_mod-2} in Section \ref{sec:polynomial_for_addition}).
We extend the result to the case of other primes $p$ and determine the minimal polynomial expressions of the functions $\varphi_i$, by using classical Lucas' Theorem \cite{Luc} in elementary number theory on congruent relations between some binomial coefficients.
Precisely, we prove the following result in Section \ref{sec:polynomial_for_addition}.
To state the result, we introduce a notation; for a positive integer $m$ and a (not necessarily reduced) fraction $a = \alpha/\beta \in \mathbb{Q}$ with $\alpha,\beta \in \mathbb{Z}$ and $\gcd(\beta,m) = 1$, we define $a^{\langle m \rangle} = \alpha \cdot \beta^{-1} \in \mathbb{Z}/m\mathbb{Z}$ where $\beta^{-1}$ means the inverse of $\beta$ in $\mathbb{Z}/m\mathbb{Z}$.
For example, $(5/66)^{\langle 7 \rangle} = 5 \cdot 5 = 4 \in \mathbb{F}_7$ since $66 \equiv 3 \pmod{7}$ and $3 \cdot 5 \equiv 1 \pmod{7}$.
We note that $a^{\langle m \rangle}$ is independent of a choice of such an expression $\alpha/\beta$ of $a$.
Then the result is as follows:

\begin{theorem}
\label{thm:p-ary_Hamming_weight__in_introduction}
For any index $i \geq 0$, the minimal polynomial expression of $\varphi_i$ is given by
\begin{displaymath}
\varphi_i(x_1,\dots,x_n)
= \sum_{d_1,\dots,d_n} \prod_{j=1}^{n} \left( \frac{ 1 }{ d_j! } \right)^{\langle p \rangle} x_j (x_j-1) \cdots (x_j-d_j+1)
\end{displaymath}
(see above for the notation $a^{\langle p \rangle}$), where the sum in the right-hand side is taken over all the $[p-1]$-restricted compositions $(d_1,\dots,d_n)$ of $p^i$ of length $n$, that is, tuples of $d_1,\dots,d_n \in [p-1]$ with $d_1 + \cdots + d_n = p^i$.
\end{theorem}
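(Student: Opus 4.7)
The plan is to recognize the right-hand side as the $\mathbb{F}_p$-reduction of a Chu--Vandermonde expansion of the integer binomial coefficient $\binom{X}{p^i}$, where $X = (x_1)_{\mathbb{Z}} \Zplus \cdots \Zplus (x_n)_{\mathbb{Z}}$, and then to invoke Lucas' theorem to identify this reduction with the carry $\varphi_i$. Since $d_j \in [p-1]$ makes $d_j!$ coprime to $p$, the factor
$$\left(\frac{1}{d_j!}\right)^{\langle p \rangle} x_j(x_j-1)\cdots(x_j-d_j+1)$$
is nothing but the falling-factorial form of the binomial coefficient $\binom{x_j}{d_j}$ reduced modulo $p$. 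Hence the theorem is equivalent to the assertion that, as functions on $(\mathbb{F}_p)^n$ (evaluated via representatives in $[p-1]$),
$$\varphi_i(x_1,\dots,x_n) \equiv \sum_{\substack{d_1,\dots,d_n \in [p-1] \\ d_1 + \cdots + d_n = p^i}} \prod_{j=1}^n \binom{x_j}{d_j} \pmod{p}.$$

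To verify this, I would start from the multivariate Chu--Vandermonde identity in $\mathbb{Z}$,
$$\binom{X}{p^i} = \sum_{\substack{d_1,\dots,d_n \geq 0 \\ d_1 + \cdots + d_n = p^i}} \prod_{j=1}^n \binom{(x_j)_{\mathbb{Z}}}{d_j},$$
and observe that since $(x_j)_{\mathbb{Z}} \leq p-1$, any term with some $d_j \geq p$ already has $\binom{(x_j)_{\mathbb{Z}}}{d_j} = 0$ as an integer, so the sum is supported on tuples in $[p-1]^n$ and matches the claimed right-hand side verbatim. To finish, I would apply Lucas' theorem to the left-hand side: writing the $p$-ary expansion $X = \Zsum{k \geq 0}{} \bigl( c_k \Ztimes p^k \bigr)$ with $c_k = \varphi_k(x_1,\dots,x_n)_{\mathbb{Z}} \in [p-1]$, and noting that the $p$-ary digits of $p^i$ are the Kronecker deltas $\delta_{k,i}$, Lucas gives
$$\binom{X}{p^i} \equiv \binom{c_i}{1} \prod_{k \neq i} \binom{c_k}{0} \equiv c_i \equiv \varphi_i(x_1,\dots,x_n) \pmod{p},$$
which completes the function identity. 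Since each $\binom{x_j}{d_j}$ with $d_j \leq p-1$ has degree $\leq p-1$ in $x_j$, the resulting polynomial has degree at most $p-1$ in each variable and is therefore the \emph{minimal} polynomial expression.

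No individual step is deep; the chief point requiring care is the bookkeeping between identities in $\mathbb{Z}$ and reductions modulo $p$. In particular, the restriction of the Chu--Vandermonde sum to $d_j \in [p-1]$ is valid only as an equality of functions on $(\mathbb{F}_p)^n$ — it exploits the vanishing of integer binomials $\binom{(x_j)_{\mathbb{Z}}}{d_j}$ when $d_j > (x_j)_{\mathbb{Z}}$ — and is not a formal polynomial identity, so the argument is inherently evaluation-based rather than a manipulation of polynomial identities.
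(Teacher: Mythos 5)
Your proposal is correct and follows essentially the same route as the paper's first proof: both reduce $\varphi_i$ to $\binom{X}{p^i} \bmod p$ via Lucas' theorem and then expand that binomial by the Vandermonde convolution (the paper phrases the convolution as a combinatorial count of choices from $n$ blocks, you cite Chu--Vandermonde directly, but these are identical). Your explicit closing remark on the degree bound in each variable, guaranteeing minimality via the uniqueness of the reduced polynomial expression, is a point the paper leaves implicit but is handled correctly.
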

The polynomial in Theorem \ref{thm:p-ary_Hamming_weight__in_introduction} has total degree at most $p^i$, which is significantly lower than the above-mentioned bound $n(p-1)$ in many cases (note that, since $(x_1)_{\mathbb{Z}} \Zplus \cdots \Zplus (x_n)_{\mathbb{Z}} \leq n(p-1)$ for any $x_1,\dots,x_n \in \mathbb{F}_p$, the definition of $\varphi_i$ implies that $\varphi_i = 0$ unless $p^i \leq n(p-1)$).
The number of the terms is given by the extended binomial coefficients, namely, it is equal to the coefficient of $X^{p^i}$ in the polynomial $(1 + X + \cdots + X^{p-1})^n$.
As well as the known case $p = 2$, our polynomials for the case $p > 2$ are symmetric polynomials due to the symmetry of the addition.
On the other hand, in contrast to the case $p = 2$, these symmetric polynomials for $p > 2$ are somewhat complicated and no simple expressions in terms of famous generating families of symmetric polynomials (such as the elementary symmetric polynomials) are found so far.
Hence, this result yields a new family of symmetric polynomials; detailed studies of their properties are left as a future research topic.

Regarding the related work, we note that, the proof in the above-mentioned previous work \cite{BPP00} is specialized to the case $p = 2$ and is not straightforwardly applicable to a general $p$.
On the other hand, for the case $p > 2$, the minimal polynomial expression of the carry $\varphi_1(x,y)$ to the next digit for the addition of two $p$-ary values was recently derived by the third author and Kurosawa \cite{NK15}; but their proof is based on a case-by-case argument depending on the fact that the number of added values is two, and is not straightforwardly applicable to a general case either.
Our proof for the general case is different from the two previous results.

On the other hand, for the carry function $\psi_1$ to the next digit in the multiplication operators with $p > 2$ (note that the case $p = 2$ is trivial, as mentioned above), we determine a formula for the minimal polynomial expression of $\psi_1$ using the Bernoulli numbers.
(The other carry functions $\psi_i$ to higher digits, i.e., with $i \geq 2$, are not considered in this paper and are left as a future research subject.)
This result also yields another new family of symmetric polynomials.
More precisely, we prove the following result; here we use the convention $B_1 = -1/2$ (rather than $B_1 = 1/2$) for the Bernoulli numbers $B_{\ell}$, i.e., $t / (e^t - 1) = \sum_{m \geq 0} B_m t^m / m!$:

\begin{theorem}
\label{thm:carry_for_p-ary_multiplication__in_introduction}
Let $p$ be an odd prime.
Then the minimal polynomial expression of $\psi_1(x_1,\dots,x_n)$ is given by
\begin{displaymath}
\psi_1(x_1,\dots,x_n) = x_1 \cdots x_n \left( \Psi(x_1 \cdots x_n) - \sum_{j=1}^{n} \Psi(x_j) + (n-1) \Psi(1) \right) \enspace,
\end{displaymath}
where $\Psi(t)$ is a polynomial defined by
\begin{displaymath}
\Psi(t) = \sum_{i=1}^{p-2} \left( \frac{ B_{p-1-i} }{ p-1-i } \right)^{\langle p \rangle} t^i
= \sum_{i=1}^{(p-3)/2} \left( \frac{ B_{p-1-2i} }{ p-1-2i } \right)^{\langle p \rangle} t^{2i} + \frac{ p-1 }{ 2 } t^{p-2}
\end{displaymath}
(see above for the notation $a^{\langle p \rangle}$ for $a \in \mathbb{Q}$).
We also have
\begin{displaymath}
\Psi(1) = (w_p)^{\langle p \rangle} = \left( B_{p-1} + \frac{ 1 }{ p } - 1 \right)^{\langle p \rangle} \enspace,
\end{displaymath}
where $w_p = ( (p-1)! + 1 )/p$ is Wilson's quotient.
\end{theorem}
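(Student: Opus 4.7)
The plan is to recast the theorem as an identity for a polynomial representing the Fermat quotient $q_p(a) := (a^{p-1}-1)/p$ on $\{1,\dots,p-1\}$. Writing $y = (x_1)_{\mathbb{Z}} \cdots (x_n)_{\mathbb{Z}}$ and $\psi_0 = y \bmod p$, I first observe that the congruence $y \equiv \psi_0 + p\psi_1 \pmod{p^2}$, combined with the binomial identity $y^p \equiv \psi_0^p \pmod{p^2}$ and $a^p \equiv a + p\,a\,q_p(a) \pmod{p^2}$ (for $a$ coprime to $p$), yields
\[
\psi_1 \equiv y\bigl(q_p(\psi_0) - q_p(y)\bigr) \pmod{p},
\]
with both sides vanishing when some $x_j = 0$. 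The classical multiplicativity $q_p(ab) \equiv q_p(a) + q_p(b) \pmod p$ then rewrites the right-hand side as $y\bigl(q_p(\psi_0) - \sum_j q_p(x_j)\bigr)$. Setting $\tilde\Psi := \Psi - \Psi(1)$, the algebraic identity $\tilde\Psi(y) - \sum_j \tilde\Psi(x_j) = \Psi(y) - \sum_j \Psi(x_j) + (n-1)\Psi(1)$ reduces the theorem to showing that $\tilde\Psi(a) \equiv q_p(a) \pmod{p}$ for every $a \in \{1,\dots,p-1\}$.

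To verify this polynomial identity, I would identify $\tilde\Psi$ with the unique interpolating polynomial of degree $\le p-2$ through the $p-1$ values of $q_p$ and extract its coefficients by Fourier inversion over $\mathbb{F}_p^*$. Using the orthogonality $\sum_{a=1}^{p-1} a^j \equiv -1 \pmod p$ when $(p-1)\mid j$ and $\equiv 0$ otherwise, the coefficient of $t^i$ (for $1 \le i \le p-2$) equals $-\sum_{a} q_p(a)\,a^{-i} \equiv -\sum_a q_p(a)\,a^k \pmod{p}$ with $k = p-1-i$. Since $p\,q_p(a)\,a^k = a^{p-1+k} - a^k$, Faulhaber's formula $\sum_{a=1}^{p-1} a^m = (B_{m+1}(p)-B_{m+1})/(m+1)$, expanded modulo $p^2$ by isolating the leading term, gives $\sum_{a=1}^{p-1} a^m \equiv pB_m \pmod{p^2}$ (valid since $B_k$ and $B_{p-1+k}$ are $p$-integral for $k \in \{1,\dots,p-2\}$). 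Hence $\sum_a q_p(a)\,a^k \equiv B_{p-1+k} - B_k \pmod{p}$, and Kummer's congruence $B_{p-1+k}/(p-1+k) \equiv B_k/k \pmod p$ reduces this to $-B_k/k$. The resulting coefficient $(B_k/k)^{\langle p\rangle} = (B_{p-1-i}/(p-1-i))^{\langle p\rangle}$ matches $\Psi$ term by term.

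For $\Psi(1)$, the same Fourier calculation at $i = 0$ gives $\Psi(1) = -\tilde\Psi(0) \equiv \sum_{a=1}^{p-1} q_p(a) \equiv w_p \pmod p$ by Lerch's classical identity. The equivalent Bernoulli expression $w_p \equiv B_{p-1} + 1/p - 1 \pmod p$ is Glaisher's congruence, which follows from the Von~Staudt--Clausen statement $pB_{p-1} \equiv -1 \pmod p$ together with the analogous Faulhaber analysis applied at the boundary index $m = p-1$.

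The main obstacle is the mod-$p^2$ power-sum computation: one must carefully expand $B_{m+1}(p) - B_{m+1} = \sum_{j=0}^{m}\binom{m+1}{j}B_j p^{m+1-j}$, verify $p$-integrality of all Bernoulli numbers involved (which holds precisely because the indices $k$ and $p-1+k$ avoid multiples of $p-1$), and combine this with Kummer's congruence. The boundary index $k=1$, where Kummer does not strictly apply, must be handled directly using $B_1 = -1/2$ and $B_p = 0$; odd indices $k \ge 3$ contribute zero via $B_k = B_{p-1+k} = 0$, accounting for the sparse form of $\Psi$ with only $(p-1)/2$ nonzero terms.
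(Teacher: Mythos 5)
Your proof is correct, but it takes a genuinely different route from the paper's. The paper first pins down the shape of $\psi_1$ for $n=2$ by exploiting associativity (the $2$-cocycle identity \eqref{eq:psi_almost_2-cocycle} forces the coefficient matrix $\alpha_{i,j}$ to be supported on the diagonal and the first row and column, Lemma \ref{lem:carry_for_p-ary_multiplication__step1}), extends to general $n$ by induction (Lemma \ref{lem:carry_for_p-ary_multiplication__step2}), and only then computes the coefficients of $\Psi$ by evaluating $\psi_1(x,\xi)$ at a primitive root $\xi$, expanding the resulting step function through characteristic functions and invoking the multiplication theorem for Bernoulli polynomials. You instead start from $\psi_1 \equiv y\bigl(q_p(\psi_0)-q_p(y)\bigr) \pmod p$, obtained by raising $y \equiv \psi_0 + p\psi_1$ to the $p$-th power; combined with the logarithmic property of the Fermat quotient this produces the $n$-variable structure in one stroke, with no cocycle analysis and no induction, and reduces the whole theorem to the single-variable statement that $\Psi-\Psi(1)$ interpolates $q_p$ on $\{1,\dots,p-1\}$. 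Your identification of that interpolating polynomial via discrete orthogonality, the mod-$p^2$ Faulhaber expansion and Kummer's congruence is likewise different from the paper's primitive-root computation, at the price of heavier classical inputs; the bookkeeping you flag as the main obstacle does go through, including at the boundary $m=p$, where the one dangerous Faulhaber term $\tfrac{m}{2}B_{m-1}p^{2}$ survives modulo $p^{2}$ only because $m=p$ contributes an extra factor of $p$ cancelling the pole of $B_{p-1}$. Amusingly, your starting identity is exactly the relation \eqref{eq:rem:relation_to_group_cohomology__Fermat_quotient}, which the paper derives only a posteriori (from the $n=p$ instance of the theorem) to prove $\Psi(1)=w_p$, and which the Appendix reinterprets via the Teichm\"uller lift; you have inverted that logic by taking it as the foundation. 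Two details worth making explicit in a write-up: the right-hand side has degree at most $p-1$ in each variable, so agreement as functions yields the \emph{minimal} polynomial expression by Proposition \ref{prop:unique_polynomial_expression}; and $\Psi(1)=w_p$ also falls out of $q_p(1)=0$ once the interpolation is established.
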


We note that, although $\varphi_1$ and $\psi_1$ in Theorems \ref{thm:p-ary_Hamming_weight__in_introduction} and \ref{thm:carry_for_p-ary_multiplication__in_introduction} look very different, these symmetric functions are related by $\psi_1(x+1,y) = \psi_1(x,y) + \varphi_1(xy,y)$ which is obvious from their meanings.
We emphasize that, the carry function $\psi_1(x_1,\dots,x_n)$ for the case of $n$ values is expressed as a sum of only $(n+1)(p-1)/2 + 1$ monomials, which is much fewer than the above-mentioned general bound $p^n$.
The number of monomials in $\psi_1$ is decreased further for some $p$; for example, the term $(n-1) \Psi(1)$ in $\psi_1$ vanishes if $w_p \equiv 0 \pmod{p}$, i.e., $p$ is an Wilson prime.
Examples of such primes are $p = 5$, $13$ and $563$, while it is still open whether or not $w_p \equiv 0 \pmod{p}$ for some other prime $p$.

\subsection{Motivation from Cryptology}

Here we explain the motivation of the present work from cryptology.
In the recent research area of cryptology, one of the most intensively studied topics is \emph{fully homomorphic encryption} (\emph{FHE}) \cite{Gen09}, which is an encryption scheme that enables \lq\lq computation on encrypted data''.
For example, in an FHE scheme recently proposed by the third author and Kurosawa \cite{NK15}, for any given ciphertexts $c_1,c_2$ which are encryption of (unknown) plaintexts $m_1,m_2 \in \mathbb{F}_p$, respectively, new ciphertexts corresponding to plaintexts $m_1 + m_2 \in \mathbb{F}_p$ and $m_1 \cdot m_2 \in \mathbb{F}_p$ can be generated from $c_1$, $c_2$ and some public parameters only, without knowing the secret plaintexts $m_1$ and $m_2$.
In other words, one can perform the addition and the multiplication operators for some data in an encrypted form while keeping the data secret.
By the fact on the polynomial expressions of functions mentioned above, this functionality is enough for generating a ciphertext corresponding to plaintext $f(m_1,m_2) \in \mathbb{F}_p$ for an \emph{arbitrary} function $f$.
This property gives rise to a problem of designing a concrete and efficient algorithm to compute the value of a given function over $\mathbb{F}_p$ by combining the addition and the multiplication only.
From the point of view, the results of this paper enable us to implement addition and multiplication of arbitrary-precision $p$-ary integers, where each digit of the integers is encrypted by the FHE scheme in \cite{NK15}.
Namely, for example, to calculate the carry in an addition of encrypted digits $x_1,\dots,x_n$, we compute the polynomial in Theorem \ref{thm:p-ary_Hamming_weight} below where the addition and the multiplication in $\mathbb{F}_p$ are replaced with the above-mentioned corresponding operations for the ciphertexts (note that subtraction operators in the polynomial over the finite field $\mathbb{F}_p$ can be replaced with suitable addition operators).
Such a concrete result, beyond just a theoretical possibility of such computation, is also new in the area of cryptology.


%

\subsubsection*{Acknowledgements.}
The authors thank Kaoru Kurosawa, and the members of Shin-Akarui-Angou-Benkyo-Kai, especially Shota Yamada, Keita Emura and Goichiro Hanaoka, for their precious comments on this work.
The authors also thank Go Yamashita for his insightful comments on this work, which yielded Remarks \ref{rem:p-ary_Hamming_weight_from_generating_function} and \ref{rem:relation_to_group_cohomology} and the Appendix.

\section{Preliminaries}
\label{sec:preliminary}

We summarize some notations and terminology used in this paper.
For any proposition $P(x)$ for an object $x$, let $\chi[P(x)]$ denote the characteristic function of $P(x)$, defined by
\begin{displaymath}
\chi[P(x)] = 1 \mbox{ if $P(x)$ is true, } \chi[P(x)] = 0 \mbox{ if $P(x)$ is false}.
\end{displaymath}
Let $p$ denote a prime number.
As mentioned in the Introduction, for $a \in \mathbb{F}_p$, we define $a_{\mathbb{Z}} \in \mathbb{Z}$ to be the representative of the residue class $a \in \mathbb{F}_p = \mathbb{Z}/p\mathbb{Z}$ chosen from the subset $[p-1] := \{0,1,\dots,p-1\}$ of $\mathbb{Z}$.
We sometimes write the addition, the subtraction and the multiplication operators in $\mathbb{Z}$ as $\Zplus$, $\Zminus$ and $\Ztimes$, respectively, for clarifying the distinction between the operators in $\mathbb{Z}$ and in $\mathbb{F}_p$.
We also use the symbols $\Zsum{}{}$ and $\Zprod{}{}$ in a similar manner.
For a polynomial $\varphi(x_1,\dots,x_n)$, let $\deg\varphi$ denote the total degree of $\varphi$, and let $\deg_{x_i}\varphi$ denote the degree of $\varphi$ with respect to the variable $x_i$.

For a function $f \colon (\mathbb{F}_p)^n \to \mathbb{F}_p$, we say that a polynomial $\varphi(x_1,\dots,x_n)$ over $\mathbb{F}_p$ is a \emph{polynomial expression} of $f$, if $\varphi(x_1,\dots,x_n) = f(x_1,\dots,x_n)$ for every tuple $(x_1,\dots,x_n) \in (\mathbb{F}_p)^n$.
The following fact is well-known; due to its importance in this paper, we give a proof of the fact for the sake of completeness.

\begin{proposition}
\label{prop:unique_polynomial_expression}
For any function $f \colon (\mathbb{F}_p)^n \to \mathbb{F}_p$, there exists a polynomial expression $\varphi$ of $f$ which has degree at most $p-1$ with respect to each variable.
Moreover, such a polynomial $\varphi$ is unique.
\end{proposition}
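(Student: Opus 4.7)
The plan is to prove existence by explicit construction and uniqueness by an induction on the number of variables, with both arguments resting on Fermat's little theorem.

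For existence, I would write down a Lagrange-type interpolation formula. For a single point $a \in \mathbb{F}_p$, Fermat's little theorem tells us that the polynomial $1 - (x-a)^{p-1}$ evaluates to $1$ at $x = a$ and to $0$ at every other element of $\mathbb{F}_p$, and it has degree exactly $p-1$. Taking products over coordinates, the polynomial
\begin{displaymath}
\delta_{(a_1,\dots,a_n)}(x_1,\dots,x_n) = \prod_{j=1}^{n} \bigl(1 - (x_j - a_j)^{p-1}\bigr)
\end{displaymath}
is an indicator for the point $(a_1,\dots,a_n) \in (\mathbb{F}_p)^n$ and has degree at most $p-1$ in each variable. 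Then
\begin{displaymath}
\varphi(x_1,\dots,x_n) := \sum_{(a_1,\dots,a_n) \in (\mathbb{F}_p)^n} f(a_1,\dots,a_n)\, \delta_{(a_1,\dots,a_n)}(x_1,\dots,x_n)
\end{displaymath}
is a polynomial expression of $f$ with $\deg_{x_i}\varphi \leq p-1$ for each $i$.

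For uniqueness, it suffices to show that any polynomial $\varphi(x_1,\dots,x_n)$ with $\deg_{x_i}\varphi \leq p-1$ for all $i$ which vanishes on all of $(\mathbb{F}_p)^n$ must be the zero polynomial (by applying this to the difference of two polynomial expressions of $f$). I would do this by induction on $n$. The base case $n=1$ is the classical fact that a univariate polynomial over a field with more roots than its degree is zero: $\varphi$ has degree at most $p-1$ but vanishes at all $p$ elements of $\mathbb{F}_p$. For the inductive step, I would write
\begin{displaymath}
\varphi(x_1,\dots,x_n) = \sum_{k=0}^{p-1} \varphi_k(x_1,\dots,x_{n-1})\, x_n^k,
\end{displaymath}
where each $\varphi_k$ has degree at most $p-1$ in each of $x_1,\dots,x_{n-1}$. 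Fixing any $(a_1,\dots,a_{n-1}) \in (\mathbb{F}_p)^{n-1}$ and viewing $\varphi(a_1,\dots,a_{n-1},x_n)$ as a univariate polynomial in $x_n$ of degree at most $p-1$ that vanishes on all of $\mathbb{F}_p$, the base case forces $\varphi_k(a_1,\dots,a_{n-1}) = 0$ for every $k$. Since this holds for every $(a_1,\dots,a_{n-1})$, the induction hypothesis applied to each $\varphi_k$ gives $\varphi_k = 0$, hence $\varphi = 0$.

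Neither step is hard; the mildest obstacle is just keeping the bookkeeping clean so that \emph{both} the degree bound and the evaluation property are preserved through the construction and the induction. An alternative uniqueness-first route would be to count: polynomials with $\deg_{x_i} \leq p-1$ form an $\mathbb{F}_p$-vector space of dimension $p^n$, which equals the dimension of the space of functions $(\mathbb{F}_p)^n \to \mathbb{F}_p$, so injectivity of the evaluation map (proved as above) immediately gives surjectivity and hence existence; but the explicit Lagrange formula is more informative and costs nothing extra, so I would include it.
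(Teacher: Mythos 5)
Your proposal is correct and follows essentially the same route as the paper: the same Lagrange-type interpolation formula $\sum_a f(a)\prod_j(1-(x_j-a_j)^{p-1})$ for existence, and the same reduction of uniqueness to the zero function followed by induction on $n$ via the coefficients of powers of $x_n$. You merely spell out more explicitly the step the paper leaves terse (fixing $(a_1,\dots,a_{n-1})$ and invoking the univariate case to see each coefficient vanishes as a function), which is a welcome clarification but not a different argument.
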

\begin{proof}
For the existence, for any $a = (a_1,\dots,a_n) \in (\mathbb{F}_p)^n$, Fermat's Little Theorem implies that the polynomial expression $\varphi_a$ of the function $\chi[x = a]$ ($x = (x_1,\dots,x_n)$) is given by $\varphi_a(x) = \prod_{i=1}^{n} (1 - (x_i - a_i)^{p-1})$.
Then the polynomial expression of a general $f$ is given by $\varphi(x) = \sum_{a \in (\mathbb{F}_p)^n} \varphi_a(x) f(a)$.

For the uniqueness, it suffices to consider the case of the zero function $f = 0$.
Assume, for the contrary, that there is such a non-zero polynomial $\varphi$.
When $n = 1$, this contradicts the polynomial remainder theorem.
When $n \geq 2$, by focusing on a non-zero coefficient (belonging to $\mathbb{F}_p[x_1,\dots,x_{n-1}]$) of some power of $x_n$ in $\varphi \in \mathbb{F}_p[x_1,\dots,x_{n-1}][x_n]$, the coefficient must be a polynomial expression of the zero function, therefore the argument is reduced to the case of smaller $n$.
Hence Proposition \ref{prop:unique_polynomial_expression} holds.
\end{proof}

We call the unique polynomial expression of the function $f$ as in Proposition \ref{prop:unique_polynomial_expression} the \emph{minimal polynomial expression} of $f$.
Then the following property also holds:

\begin{proposition}
\label{prop:polynomial_expression_is_minimal}
For any function $f \colon (\mathbb{F}_p)^n \to \mathbb{F}_p$, the minimal polynomial expression $\varphi$ of $f$ has the minimum total degree among all polynomial expressions of $f$.
\end{proposition}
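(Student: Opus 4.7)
The plan is to show that any polynomial expression of $f$ can be reduced, without increasing its total degree, to a polynomial satisfying the per-variable degree bound of Proposition \ref{prop:unique_polynomial_expression}; by uniqueness, the reduced polynomial must equal the minimal polynomial expression $\varphi$, giving the desired inequality.

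Concretely, let $\psi \in \mathbb{F}_p[x_1,\dots,x_n]$ be an arbitrary polynomial expression of $f$. First I would recall the identity $x^p = x$ for every $x \in \mathbb{F}_p$ (a consequence of Fermat's Little Theorem, together with the observation that $0^p = 0$). This identity implies that, whenever $\psi$ contains a monomial with some variable $x_i$ appearing to a power $\geq p$, one may replace a factor $x_i^p$ by $x_i$ without changing the function $(\mathbb{F}_p)^n \to \mathbb{F}_p$ induced by $\psi$. The crucial observation is that this replacement decreases the total degree of the monomial by exactly $p-1 \geq 1$.

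Next I would iterate this reduction: as long as some monomial has a variable of degree $\geq p$, perform the substitution $x_i^p \mapsto x_i$ on it. Since each step strictly decreases the total degree of the affected monomial (and possibly produces cancellations with other monomials, which can only decrease the degree further), the process terminates with a polynomial $\psi'$ satisfying $\deg_{x_i}\psi' \leq p-1$ for all $i$ and $\deg \psi' \leq \deg \psi$. Because every substitution preserves the induced function on $(\mathbb{F}_p)^n$, $\psi'$ is still a polynomial expression of $f$. By the uniqueness part of Proposition \ref{prop:unique_polynomial_expression}, we must have $\psi' = \varphi$, hence $\deg \varphi = \deg \psi' \leq \deg \psi$. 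Since $\psi$ was arbitrary, this gives the claim.

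I do not foresee a serious obstacle: the argument is a straightforward normal-form reduction modulo the ideal $(x_1^p - x_1, \dots, x_n^p - x_n)$, and the only point requiring a line of justification is that the rewrite $x_i^p \mapsto x_i$ is genuinely degree-decreasing (so that termination and the degree inequality come for free). The appeal to Proposition \ref{prop:unique_polynomial_expression} then closes the argument without any extra work.
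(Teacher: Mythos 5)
Your argument is correct and coincides with the paper's own proof: both reduce an arbitrary polynomial expression via the degree-decreasing rewrite $x_i^p \mapsto x_i$ (justified by Fermat's Little Theorem) and then invoke the uniqueness part of Proposition~\ref{prop:unique_polynomial_expression} to identify the reduced polynomial with $\varphi$. No gap to report.
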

\begin{proof}
For any polynomial expression $\psi$ of $f$, if $\deg_{x_i}\psi \geq p$ for some variable $x_i$, then $\psi$ can be converted to another polynomial expression of $f$ of lower degree with respect to $x_i$ by replacing $x_i{}^p$ with $x_i$, since $a{}^p = a$ for every $a \in \mathbb{F}_p$ by Fermat's Little Theorem.
Iterating the process, $\psi$ can be converted to the minimal polynomial expression of $f$, which is equal to $\varphi$ by the uniqueness property in Proposition \ref{prop:unique_polynomial_expression}.
Now the conversion process does not increase the total degree, therefore we have $\deg\varphi \leq \deg\psi$.
Hence Proposition \ref{prop:polynomial_expression_is_minimal} holds.
\end{proof}

We note that the minimal polynomial expression of any symmetric function is a symmetric polynomial owing to the uniqueness property, since any permutation of the variables in the polynomial also yields such a polynomial expression of the same function.
For any function over $\mathbb{F}_p$, we often identify the minimal polynomial expression of the function with the function itself unless some ambiguity occurs.

Now we introduce useful notations to regard some rational numbers as elements of $\mathbb{F}_p$.
For a positive integer $m$ and a (not necessarily reduced) fraction $a = \alpha/\beta \in \mathbb{Q}$ with $\alpha,\beta \in \mathbb{Z}$ and $\gcd(\beta,m) = 1$, we define
\begin{displaymath}
a^{\langle m \rangle} = \alpha \cdot \beta^{-1} \in \mathbb{Z}/m\mathbb{Z}
\end{displaymath}
where $\beta^{-1}$ means the inverse of $\beta$ in $\mathbb{Z}/m\mathbb{Z}$.
For example, $(5/66)^{\langle 7 \rangle} = 5 \cdot 5 = 4 \in \mathbb{F}_7$ since $66 \equiv 3 \pmod{7}$ and $3 \cdot 5 \equiv 1 \pmod{7}$.
We note that $a^{\langle m \rangle}$ is independent of a choice of such an expression $\alpha/\beta$ of $a$.
This implies that the map $a \mapsto a^{\langle m \rangle}$ is a ring homomorphism to $\mathbb{Z}/m\mathbb{Z}$ from the ring of rational numbers that can be expressed as a fraction $\alpha/\beta$ with $\alpha,\beta \in \mathbb{Z}$ and $\gcd(\beta,m) = 1$.
We restate this property for the sake of reference.
For any polynomial $F(x_1,\dots,x_n)$ over $\mathbb{Q}$ in which all coefficients can be expressed as fractions with denominators being coprime to $m$, we define $F^{\langle m \rangle}(x_1,\dots,x_n)$ to be the polynomial over $\mathbb{Z}/m\mathbb{Z}$ obtained by applying the map $a \mapsto a^{\langle m \rangle}$ to every coefficient.
Then we have the following, which we will use in our argument several times:

\begin{lemma}
\label{lem:rational_number_as_element_of_Fp}
Let $a_1,\dots,a_n \in \mathbb{Q}$, let $F(x_1,\dots,x_n)$ be a polynomial over $\mathbb{Q}$, and suppose that all of $a_1,\dots,a_n$ and all coefficients in $F$ can be expressed as fractions with denominators being coprime to $m$.
Then we have $F^{\langle m \rangle}(a_1^{\langle m \rangle},\dots,a_n^{\langle m \rangle}) = F(a_1,\dots,a_n)^{\langle m \rangle}$ (see above for the notations).
\end{lemma}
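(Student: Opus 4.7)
The plan is to reduce the lemma to the general fact that ring homomorphisms commute with polynomial evaluation. The preceding paragraph already identifies the right setup: let $R_m \subset \mathbb{Q}$ denote the subring consisting of those rationals expressible as $\alpha/\beta$ with $\alpha,\beta \in \mathbb{Z}$ and $\gcd(\beta,m) = 1$, and let $\phi_m \colon R_m \to \mathbb{Z}/m\mathbb{Z}$ be the map $a \mapsto a^{\langle m \rangle}$. Once we know that $\phi_m$ is a well-defined ring homomorphism, the lemma is just the statement that $\phi_m$ commutes with evaluating a polynomial.

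First, I would verify (or appeal to the remark preceding the lemma) that $\phi_m$ is well-defined and a ring homomorphism. Well-definedness: if $\alpha/\beta = \alpha'/\beta'$ with both denominators coprime to $m$, then $\alpha \beta' = \alpha' \beta$ in $\mathbb{Z}$, so after reducing modulo $m$ and multiplying by $(\beta\beta')^{-1}$ we obtain $\alpha \beta^{-1} = \alpha' (\beta')^{-1}$ in $\mathbb{Z}/m\mathbb{Z}$. Ring-homomorphism properties then follow by writing $\alpha/\beta + \alpha'/\beta' = (\alpha\beta' + \alpha'\beta)/(\beta\beta')$ and $(\alpha/\beta)(\alpha'/\beta') = (\alpha\alpha')/(\beta\beta')$ and applying the same computation; the images of $0$ and $1$ are as required.

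Next, under the hypothesis of the lemma, both $a_1,\dots,a_n$ and all coefficients of $F$ lie in $R_m$. Since $R_m$ is a subring of $\mathbb{Q}$, the value $F(a_1,\dots,a_n)$, obtained by additions and multiplications in $R_m$, also lies in $R_m$, so $F(a_1,\dots,a_n)^{\langle m \rangle}$ is defined. Applying $\phi_m$ to this value and using that $\phi_m$ preserves sums and products, one obtains $\phi_m(F(a_1,\dots,a_n))$ as the same polynomial expression with each coefficient replaced by its image under $\phi_m$ and each $a_j$ replaced by $\phi_m(a_j) = a_j^{\langle m \rangle}$. By the definition of $F^{\langle m \rangle}$, this is exactly $F^{\langle m \rangle}(a_1^{\langle m \rangle},\dots,a_n^{\langle m \rangle})$, completing the proof.

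There is no genuine obstacle here; the content is purely formal. If anything, the only subtlety is the well-definedness check under the choice of representation $\alpha/\beta$, but this is already observed in the text introducing the notation $a^{\langle m \rangle}$, and the rest is the standard observation that polynomial evaluation is functorial in the coefficient ring along any ring homomorphism.
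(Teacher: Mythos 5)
Your proposal is correct and matches the paper's intent: the paper gives no separate proof of this lemma, presenting it as a restatement of the immediately preceding observation that $a \mapsto a^{\langle m \rangle}$ is a well-defined ring homomorphism on the subring of rationals with denominators coprime to $m$, from which compatibility with polynomial evaluation is formal. Your well-definedness check and the functoriality argument are exactly the justification the paper implicitly relies on.
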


\section{Polynomial Expressions of Carries for Addition}
\label{sec:polynomial_for_addition}

In Section \ref{subsec:polynomial_for_addition__carry}, we determine the minimal polynomial expression of the function $\varphi_i(x_1,\dots,x_n)$ that yields the carry to the $i$-th digit in the integer addition $(x_1)_{\mathbb{Z}} \Zplus \cdots \Zplus (x_n)_{\mathbb{Z}}$ (see \eqref{eq:definition_for_carry_function_for_addition} in the Introduction for the precise definition of $\varphi_i$).
Then in Section \ref{subsec:polynomial_for_addition__algorithm_general}, we discuss algorithms for addition of $p$-ary integers where each step is composed of polynomial evaluations.

\subsection{The Results}
\label{subsec:polynomial_for_addition__carry}

Here we determine the minimal polynomial expressions of the functions $\varphi_i$ defined above.
Note that $\varphi_0(x_1,\dots,x_n) = \sum_{j=1}^{n} x_j$ (in $\mathbb{F}_p$), while we have $\varphi_i = 0$ if $n(p-1) < p^i$.
Our argument below is based on Lucas' Theorem \cite{Luc} in elementary number theory (see e.g., Exercise 6.a of Chapter 1 in \cite{Sta}):

\begin{proposition}
[{Lucas' Theorem \cite{Luc}}]
\label{prop:Lucas}
Let $a = (a_M \dots a_1a_0)_p$ and $b = (b_M \dots b_1b_0)_p$ be $p$-ary expressions of integers $a,b \geq 0$, where the leading digits are allowed to be zero.
Then we have
\begin{displaymath}
\binom{a}{b} \equiv \binom{a_M}{b_M} \cdots \binom{a_1}{b_1} \binom{a_0}{b_0} \pmod p \enspace,
\end{displaymath}
where we define $\binom{a'}{b'} = 0$ if $a' < b'$.
\end{proposition}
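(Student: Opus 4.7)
The plan is to derive Lucas' Theorem from a polynomial identity in $\mathbb{F}_p[X]$. The key observation is that $\binom{a}{b}$ is the coefficient of $X^b$ in $(1+X)^a$, and reducing modulo $p$ lets us combine the Frobenius endomorphism with the uniqueness of base-$p$ expansions.

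First, I would establish the freshman's dream $(1+X)^{p^k} \equiv 1 + X^{p^k} \pmod{p}$ for every $k \geq 0$, which follows by induction on $k$ from the base case $\binom{p}{j} \equiv 0 \pmod{p}$ for $1 \leq j \leq p-1$. Decomposing $a = \sum_{k=0}^{M} a_k p^k$ according to its base-$p$ digits and applying this identity factor-by-factor gives
\[
(1+X)^a = \prod_{k=0}^{M} (1+X)^{a_k p^k} \equiv \prod_{k=0}^{M} \bigl(1+X^{p^k}\bigr)^{a_k} \pmod{p}.
\]
Expanding each binomial power yields
\[
\prod_{k=0}^{M} \sum_{c_k=0}^{a_k} \binom{a_k}{c_k} X^{c_k p^k} = \sum_{(c_0,\ldots,c_M)} \left( \prod_{k=0}^{M} \binom{a_k}{c_k} \right) X^{\sum_k c_k p^k},
\]
so the coefficient of $X^b$ on the right is the sum of $\prod_k \binom{a_k}{c_k}$ over tuples satisfying $0 \leq c_k \leq a_k$ and $\sum_k c_k p^k = b$. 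Since each such $c_k$ automatically satisfies $c_k \leq p-1$, the uniqueness of base-$p$ representations forces $c_k = b_k$ for every $k$. Hence at most one tuple contributes, and the coefficient equals $\prod_{k=0}^{M} \binom{a_k}{b_k}$; comparing with the LHS coefficient $\binom{a}{b}$ gives the theorem.

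The main obstacle, such as it is, lies in the boundary cases. If $b$ has a nonzero digit $b_k$ at some position $k > M$, then $b > a$, so $\binom{a}{b} = 0$; padding $a$ with leading zero digits $a_k = 0$ for $k > M$ makes the product on the right pick up a factor $\binom{0}{b_k} = 0$, matching both sides. Similarly, if $b_k > a_k$ for some $k \leq M$, no tuple satisfies the constraints, so the coefficient is $0$, which matches the convention $\binom{a_k}{b_k} = 0$ in the statement. Beyond these small bookkeeping checks, the proof is essentially Frobenius combined with place-value uniqueness.
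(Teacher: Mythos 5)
Your proof is correct: it is the standard generating-function argument via the Frobenius identity $(1+X)^{p^k}\equiv 1+X^{p^k}\pmod p$, and the boundary cases are handled properly. The paper itself cites Lucas' Theorem without proof, but Remark \ref{rem:p-ary_Hamming_weight_from_generating_function} explicitly notes that Lucas' Theorem can be proven by exactly this kind of argument, so your approach matches what the paper has in mind.
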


Then we have the following result (restatement of Theorem \ref{thm:p-ary_Hamming_weight__in_introduction} in the Introduction):

\begin{theorem}
\label{thm:p-ary_Hamming_weight}
For any index $i \geq 0$, the minimal polynomial expression of $\varphi_i$ is given by
\begin{displaymath}
\varphi_i(x_1,\dots,x_n)
= \sum_{d_1,\dots,d_n} \prod_{j=1}^{n} \left( \frac{ 1 }{ d_j! } \right)^{\langle p \rangle} x_j (x_j-1) \cdots (x_j-d_j+1)
\end{displaymath}
(see Section \ref{sec:preliminary} for the notation $a^{\langle p \rangle}$ for $a \in \mathbb{Q}$), where the sum in the right-hand side is taken over all the $[p-1]$-restricted compositions $(d_1,\dots,d_n)$ of $p^i$ of length $n$, that is, tuples of $d_1,\dots,d_n \in [p-1]$ with $d_1 + \cdots + d_n = p^i$.
\end{theorem}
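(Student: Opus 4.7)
My plan is to reduce the problem to the $p$-ary digit extraction of a specific binomial coefficient. Let $N := (x_1)_{\mathbb{Z}} \Zplus \cdots \Zplus (x_n)_{\mathbb{Z}}$ with $p$-ary expansion $N = \sum_{k \geq 0} N_k p^k$, so that $\varphi_i(x_1,\dots,x_n)_{\mathbb{Z}} = N_i$ by definition of $\varphi_i$. Since the $p$-ary expansion of $p^i$ consists of a single digit $1$ in position $i$ and zeros elsewhere, applying Lucas' Theorem (Proposition \ref{prop:Lucas}) to $\binom{N}{p^i}$ gives
\[
\binom{N}{p^i} \equiv \binom{N_i}{1} \prod_{k \neq i} \binom{N_k}{0} \equiv N_i \equiv \varphi_i(x_1,\dots,x_n) \pmod{p} \enspace.
\]
Thus it suffices to exhibit a polynomial over $\mathbb{F}_p$ of degree at most $p-1$ in each variable whose value at every $(x_1,\dots,x_n) \in (\mathbb{F}_p)^n$ coincides with $\binom{N}{p^i} \bmod p$.

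For this, I would apply the generating-function identity $(1+y)^N = \prod_{j=1}^n (1+y)^{(x_j)_{\mathbb{Z}}}$ in $\mathbb{Z}[y]$ and extract the coefficient of $y^{p^i}$ from both sides, obtaining
\[
\binom{N}{p^i} = \sum_{d_1 + \cdots + d_n = p^i} \prod_{j=1}^n \binom{(x_j)_{\mathbb{Z}}}{d_j} \enspace.
\]
Because $(x_j)_{\mathbb{Z}} \leq p-1$, any term with some $d_j \geq p$ contains a zero factor $\binom{(x_j)_{\mathbb{Z}}}{d_j} = 0$, so the sum may be restricted to $(d_1,\dots,d_n) \in [p-1]^n$ without changing the value. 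For such $d_j$, the integer $d_j!$ is coprime to $p$, and Lemma \ref{lem:rational_number_as_element_of_Fp} identifies $\binom{(x_j)_{\mathbb{Z}}}{d_j} \bmod p$ with the value at $x_j = (x_j)_{\mathbb{Z}}$ of the polynomial $(1/d_j!)^{\langle p \rangle} x_j(x_j-1)\cdots(x_j - d_j + 1) \in \mathbb{F}_p[x_j]$. Assembling these factors shows that the polynomial in the statement takes the value $\varphi_i(x_1,\dots,x_n)$ at every point of $(\mathbb{F}_p)^n$.

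Finally, each summand of this polynomial has degree $d_j \leq p-1$ in $x_j$, so the whole polynomial has degree at most $p-1$ in each variable. By the uniqueness part of Proposition \ref{prop:unique_polynomial_expression}, it is the minimal polynomial expression of $\varphi_i$ (with the convention that an empty sum equals $0$, which correctly recovers $\varphi_i = 0$ whenever $p^i > n(p-1)$). The main conceptual hurdle is recognizing that Lucas' Theorem converts the combinatorial digit-extraction function $N \mapsto N_i \bmod p$ into the single binomial coefficient $\binom{N}{p^i} \bmod p$; once this bridge is in place, the remaining steps amount to a routine generating-function extraction together with a degree check.
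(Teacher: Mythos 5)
Your proof is correct and follows essentially the same route as the paper's primary proof: Lucas' Theorem identifies the $i$-th digit with $\binom{N}{p^i} \bmod p$, the Vandermonde convolution expands this over compositions $(d_1,\dots,d_n)$, and each $\binom{(x_j)_{\mathbb{Z}}}{d_j}$ is rewritten as a falling-factorial polynomial over $\mathbb{F}_p$. The only cosmetic difference is that you obtain the Vandermonde identity by extracting the coefficient of $y^{p^i}$ from $(1+y)^N = \prod_j(1+y)^{(x_j)_{\mathbb{Z}}}$ rather than by the paper's direct block-counting argument.
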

\begin{proof}
First, we have
\begin{equation}
\label{eq:thm:p-ary_Hamming_weight__equality_from_Lucas_Theorem}
\varphi_i(x_1,\dots,x_n)_{\mathbb{Z}} = \binom{ \varphi_i(x_1,\dots,x_n)_{\mathbb{Z}} }{ 1 } \equiv \binom{ (x_1)_{\mathbb{Z}} \Zplus \cdots \Zplus (x_n)_{\mathbb{Z}} }{ p^i } \pmod{p}
\end{equation}
by Proposition \ref{prop:Lucas} applied to $a = (x_1)_{\mathbb{Z}} \Zplus \cdots \Zplus (x_n)_{\mathbb{Z}}$ and $b = p^i$ (i.e., $b_i = 1$ and $b_{i'} = 0$ for $i' \neq i$).
The binomial coefficient in the right-hand side is equal to the number of possible choices of $p^i$ objects from $(x_1)_{\mathbb{Z}} \Zplus \cdots \Zplus (x_n)_{\mathbb{Z}}$ objects.
We divide the $(x_1)_{\mathbb{Z}} \Zplus \cdots \Zplus (x_n)_{\mathbb{Z}}$ objects into $n$ blocks of $(x_1)_{\mathbb{Z}}$ objects, $(x_2)_{\mathbb{Z}}$ objects, ..., $(x_n)_{\mathbb{Z}}$ objects, and for each choice of the $p^i$ objects, we write the number of objects chosen from the $h$-th block as $d_h$.
Then the values $d_1,\dots,d_n$ satisfy that $d_h \in [p-1]$ (since $(x_h)_{\mathbb{Z}} \leq p-1$) and $d_1 + \cdots + d_n = p^i$, and we have
\begin{displaymath}
\binom{ (x_1)_{\mathbb{Z}} \Zplus \cdots \Zplus (x_n)_{\mathbb{Z}} }{ p^i }
= \sum_{d_1,\dots,d_n} \prod_{j=1}^{n} \binom{ (x_j)_{\mathbb{Z}} }{ d_j }
\end{displaymath}
where the sum is taken over all tuples $(d_1,\dots,d_n)$ as above.
Moreover, we have
\begin{displaymath}
\begin{split}
\binom{ (x_j)_{\mathbb{Z}} }{ d_j }^{\langle p \rangle}
&= \left( \frac{ (x_j)_{\mathbb{Z}} ((x_j)_{\mathbb{Z}} \Zminus 1) \cdots ((x_j)_{\mathbb{Z}} \Zminus d_j \Zplus 1) }{ d_j! } \right)^{\langle p \rangle} \\
&= \left( \frac{ 1 }{ d_j! } \right)^{\langle p \rangle} x_j (x_j - 1) \cdots (x_j - d_j + 1) \enspace.
\end{split}
\end{displaymath}
Since $(a_{\mathbb{Z}})^{\langle p \rangle} = a$ for any $a \in \mathbb{F}_p$, the claim of Theorem \ref{thm:p-ary_Hamming_weight} follows by summarizing these arguments.
\end{proof}

\begin{remark}
\label{rem:p-ary_Hamming_weight_from_generating_function}
The property \eqref{eq:thm:p-ary_Hamming_weight__equality_from_Lucas_Theorem} in the proof above can be also derived by comparing the coefficients of the monomial $X^{p^i}$ in the leftmost and the rightmost sides of the following equality for polynomials over $\mathbb{F}_p$:
\begin{displaymath}
\begin{split}
(1+X)^{(x_1)_{\mathbb{Z}} \Zplus \cdots \Zplus (x_n)_{\mathbb{Z}}}
& = (1+X)^{\varphi_0(x_1,\dots,x_n)_{\mathbb{Z}} \Zplus \varphi_1(x_1,\dots,x_n)_{\mathbb{Z}} \Ztimes p \Zplus \varphi_2(x_1,\dots,x_n)_{\mathbb{Z}} \Ztimes p^2 + \cdots} \\
&= (1+X)^{\varphi_0(x_1,\dots,x_n)_{\mathbb{Z}}} (1+X)^{\varphi_1(x_1,\dots,x_n)_{\mathbb{Z}} \Ztimes p} (1+X)^{\varphi_2(x_1,\dots,x_n)_{\mathbb{Z}} \Ztimes p^2} \cdots \\
&\equiv (1+X)^{\varphi_0(x_1,\dots,x_n)_{\mathbb{Z}}} (1+X^p)^{\varphi_1(x_1,\dots,x_n)_{\mathbb{Z}}} (1+X^{p^2})^{\varphi_2(x_1,\dots,x_n)_{\mathbb{Z}}} \cdots \pmod{p}
\end{split}
\end{displaymath}
(since $0 \leq \varphi_j(x_1,\dots,x_n)_{\mathbb{Z}} \leq p-1$ for each index $j$).
We note that Lucas' Theorem itself can be also proven by a similar argument.
\end{remark}

\begin{example}
\label{ex:symmetric_polynomial_mod-2}
When $p = 2$, the indices $d_1,\dots,d_n$ in the statement of Theorem \ref{thm:p-ary_Hamming_weight} are taken in such a way that $d_1,\dots,d_n \in \{0,1\}$ and $d_1 + \cdots + d_n = p^i$.
Then, by setting $S = \{j \in \{1,\dots,n\} \mid d_j = 1\}$, Theorem \ref{thm:p-ary_Hamming_weight} implies that
\begin{displaymath}
\varphi_i(x_1,\dots,x_n) = \sum_{S \subset \{1,\dots,n\} \,,\, |S| = 2^i} \prod_{j \in S} x_j = e_{2^i}(x_1,\dots,x_n) \enspace,
\end{displaymath}
i.e., $\varphi_i = e_{2^i}$, the elementary symmetric polynomial of degree $2^i$.
This coincides with the result by Boyar, Peralta and Pochuev \cite{BPP00} mentioned in the Introduction.
\end{example}

\begin{example}
\label{ex:symmetric_polynomial_mod-3}
When $p = 3$, the following expressions of the first three symmetric polynomials $\varphi_i$ in terms of some famous generating families of symmetric polynomials are calculated by using the software Sage, where $m_{\lambda}$, $e_j$ and $s_{\lambda}$ denote the monomial symmetric polynomials, elementary symmetric polynomials and Schur polynomials, respectively.
Here, some relations between these polynomials owing to the fact that the coefficient field is $\mathbb{F}_3$ instead of $\mathbb{Q}$ are utilized; e.g., we have $m_{1^1 3^1} = 2 m_{1^2} = - m_{1^2}$ as polynomials over $\mathbb{F}_3$.
\begin{displaymath}
\varphi_0 = m_{1^1} = e_1 \enspace.
\end{displaymath}
\begin{displaymath}
\begin{split}
\varphi_1
= m_{1^3} - m_{1^1 2^1} - m_{1^2}
= e_3 - e_2e_1 - e_2
= -s_{1^1 2^1} - s_{1^2} \enspace.
\end{split}
\end{displaymath}
\begin{displaymath}
\begin{split}
\varphi_2
& = m_{1^9} - m_{1^8} - m_{1^7 2^1} - m_{1^6} + m_{1^5 2^2} - m_{1^5 2^1} - m_{1^5} + m_{1^4 2^2} - m_{1^4 2^1} - m_{1^3 2^3} + m_{1^2 2^3} + m_{1^1 2^4} \\
& = e_9 + e_8e_1 - e_7e_2 + e_7 - e_6e_3 - e_6e_1 - e_6 + e_5e_4 + e_5e_3 - e_5e_1 - e_5 \\
&= (s_{1^9} - s_{1^5 2^2} + s_{1^1 2^4}) + (s_{1^8} + s_{1^6 2^1} + s_{1^4 2^2} + s_{1^2 2^3}) + (-s_{1^5 2^1}) + (s_{1^6} - s_{1^4 2^1}) + (-s_{1^5}) \enspace.
\end{split}
\end{displaymath}
\end{example}

We give an observation for the result of Theorem \ref{thm:p-ary_Hamming_weight}.
For a tuple $\vec{d} = (d_1,\dots,d_n)$ of non-negative integers, let
\begin{displaymath}
\Gamma_{\vec{d}}(x_1,\dots,x_n) = \prod_{j=1}^{n} x_j(x_j-1) \cdots (x_j - d_j + 1) \enspace.
\end{displaymath}
Then it is straightforward to show that, the linear space of polynomials in $x_1,\dots,x_n$ with total degree at most $D$ and degree at most $p-1$ in each variable $x_j$ is spanned as a basis (over any field) by the polynomials $\Gamma_{\vec{d}}(x_1,\dots,x_n)$ with $\vec{d} \in R_{\leq D}$, where $R_{\leq D}$ consists of tuples $\vec{d}$ with $d_j \in [p-1]$ for each index $j$ and $d_1 + \cdots + d_n \leq D$.
Let $R_D = R_{\leq D} \setminus R_{\leq D-1}$.
Now Theorem \ref{thm:p-ary_Hamming_weight} shows that the minimal polynomial expression of $\varphi_i$ lies in the subspace spanned by the polynomials $\Gamma_{\vec{d}}(x_1,\dots,x_n)$ with $\vec{d} \in R_{p^i}$, and the corresponding coefficients have a fairly simple expression.
This fact inspires an alternative proof of Theorem \ref{thm:p-ary_Hamming_weight} which does not rely on Lucas' Theorem (nor an essentially similar argument in Remark \ref{rem:p-ary_Hamming_weight_from_generating_function}); note that this proof is also different from the one in the previous work by Boyar et al.~\cite{BPP00} for $p = 2$.

\begin{proof}
[Another proof of Theorem \ref{thm:p-ary_Hamming_weight}]
First, we assume (as seen in the next paragraph) that $\deg\varphi_i(x_1,\dots,x_n) \leq p^i$.
Then $\varphi_i(x_1,\dots,x_n)$ belongs to the above-mentioned linear space over $\mathbb{F}_p$ spanned by $\Gamma_{\vec{d}}(x_1,\dots,x_n)$ with $\vec{d} \in R_{\leq p^i}$.
Let $\gamma_{\vec{d}}$ be the coefficient of $\Gamma_{\vec{d}}(x_1,\dots,x_n)$ in the corresponding expression of $\varphi_i(x_1,\dots,x_n)$.
Moreover, we define a partial ordering $\preceq$ on the tuples of $n$ non-negative integers in a way that $\vec{d} \preceq \vec{d'}$ if and only if $d_j \leq d'_j$ for every index $j$.
Now for $\vec{d},\vec{d'} \in R_{\leq p^i}$, we have $\Gamma_{\vec{d}}(d'_1,\dots,d'_n) = 0$ unless $\vec{d} \preceq \vec{d'}$, therefore
\begin{displaymath}
\varphi_i(d'_1,\dots,d'_n) = \sum_{\vec{d} \preceq \vec{d'}} \gamma_{\vec{d}} \cdot \Gamma_{\vec{d}}(d'_1,\dots,d'_n) \enspace.
\end{displaymath}
Based on this equality, since $\varphi_i(d'_1,\dots,d'_n) = 0$ for every $\vec{d'} \in R_{\leq p^i - 1}$ by the meaning of $\varphi_i$ and we have $\Gamma_{\vec{d}}(d_1,\dots,d_n) = \prod_{j=1}^{n} d_j! \neq 0$ in $\mathbb{F}_p$, a recursive argument implies that $\gamma_{\vec{d}} = 0$ for every $\vec{d} \in R_{\leq p^i - 1}$.
Moreover, by virtue of this property, for each $\vec{d} \in R_{p^i}$, we have
\begin{displaymath}
1 = \varphi_i(d_1,\dots,d_n) = \gamma_{\vec{d}} \cdot \Gamma_{\vec{d}}(d_1,\dots,d_n) = \gamma_{\vec{d}} \cdot \prod_{j=1}^{n} d_j! \enspace,
\end{displaymath}
therefore $\gamma_{\vec{d}} = \prod_{j=1}^{n} (1/d_j!)^{\langle p \rangle}$.
Hence $\varphi_i$ has the expression as in the statement of Theorem \ref{thm:p-ary_Hamming_weight}.

The remaining task is to show that $\deg\varphi_i(x_1,\dots,x_n) \leq p^i$.
The case $i = 0$ is obvious, therefore we consider the case $i \geq 1$.
We prove the claim by induction on $n$.
The first case $n = 1$ is obvious; $\varphi_i(x_1) = 0$ for $i \geq 1$.
On the other hand, for the case when $i = 1$ and $n = 2$, the fact $\deg\varphi_1(x_1,x_2) = p$ was proven in \cite{NK15} (by an elementary argument without Lucas' Theorem).
For the remaining cases, the $p$-ary expression of $(x_1)_{\mathbb{Z}} \Zplus \cdots \Zplus (x_{n-1})_{\mathbb{Z}}$ is $(\dots,\varphi_1(x_1,\dots,x_{n-1})_{\mathbb{Z}},\varphi_0(x_1,\dots,x_{n-1})_{\mathbb{Z}})_p$, and $\deg\varphi_i(x_1,\dots,x_{n-1}) \leq p^i$ by the induction hypothesis.
Now by the meaning of $\varphi_i$, we have $\varphi_i(x_1,\dots,x_n) - \varphi_i(x_1,\dots,x_{n-1}) \in \{0,1\}$, and the case $\varphi_i(x_1,\dots,x_n) - \varphi_i(x_1,\dots,x_{n-1}) = 1$ occurs precisely when $\varphi_j(x_1,\dots,x_{n-1}) = p-1$ for every $1 \leq j \leq i-1$ and $\varphi_0(x_1,\dots,x_{n-1})_{\mathbb{Z}} + (x_n)_{\mathbb{Z}} \geq p$.
For the former condition, $\deg\chi[y = p-1] \leq p-1$ by Proposition \ref{prop:unique_polynomial_expression}, therefore $\deg\chi[\varphi_j(x_1,\dots,x_{n-1}) = p-1] \leq p^j (p-1)$ for each $1 \leq j \leq i-1$.
On the other hand, for the latter condition, we have $\chi[\varphi_0(x_1,\dots,x_{n-1})_{\mathbb{Z}} + (x_n)_{\mathbb{Z}} \geq p] = \varphi_1(\varphi_0(x_1,\dots,x_{n-1}), x_n)$, therefore $\deg\chi[\varphi_0(x_1,\dots,x_{n-1})_{\mathbb{Z}} + (x_n)_{\mathbb{Z}} \geq p] \leq p$.
By these arguments, we have
\begin{displaymath}
\begin{split}
&\varphi_i(x_1,\dots,x_n) - \varphi_i(x_1,\dots,x_{n-1}) \\
&= \chi[\varphi_0(x_1,\dots,x_{n-1})_{\mathbb{Z}} + (x_n)_{\mathbb{Z}} \geq p] \cdot \prod_{j=1}^{i-1} \chi[\varphi_j(x_1,\dots,x_{n-1}) = p-1]
\end{split}
\end{displaymath}
and
\begin{displaymath}
\deg(\varphi_i(x_1,\dots,x_n) - \varphi_i(x_1,\dots,x_{n-1})) \leq p + \sum_{j=1}^{i-1} p^j (p-1) = p^i \enspace.
\end{displaymath}
Hence we have $\deg\varphi_i(x_1,\dots,x_n) \leq p^i$ by the induction hypothesis, concluding the proof.
\end{proof}

We also note that, when $n = 2$, Theorem \ref{thm:p-ary_Hamming_weight} can be refined as follows (note that now $\varphi_i = 0$ for $i \geq 2$, since $2(p-1) < p^2$):

\begin{theorem}
\label{thm:p-ary_Hamming_weight__two_numbers}
In the case $n = 2$, for $x_1,x_2 \in \mathbb{F}_p$, we have
\begin{displaymath}
\varphi_1(x_1,x_2)
= \sum_{d_1=1}^{p-1} (-1)^{d_1} \left( \frac{ 1 }{ d_1 } \right)^{\langle p \rangle} x_1 (x_1-1) \cdots (x_1-d_1+1) x_2 (x_2-1) \cdots (x_2-(p-d_1)+1) \enspace.
\end{displaymath}
\end{theorem}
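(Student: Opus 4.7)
The plan is to deduce Theorem \ref{thm:p-ary_Hamming_weight__two_numbers} as a direct specialization of Theorem \ref{thm:p-ary_Hamming_weight} to $n=2$, $i=1$, followed by a simplification of the binomial coefficient $\bigl(1/(d_1!(p-d_1)!)\bigr)^{\langle p \rangle}$ using Wilson's theorem.

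First I would apply Theorem \ref{thm:p-ary_Hamming_weight} with $n=2$, $i=1$. The summation index set consists of tuples $(d_1,d_2) \in [p-1]^2$ with $d_1 + d_2 = p$; this forces $1 \le d_1 \le p-1$ and $d_2 = p - d_1$, so the outer sum collapses to a single sum over $d_1 \in \{1,\dots,p-1\}$. The resulting expression is
\begin{displaymath}
\varphi_1(x_1,x_2) = \sum_{d_1=1}^{p-1} \left( \frac{1}{d_1!(p-d_1)!} \right)^{\langle p \rangle} x_1(x_1-1)\cdots(x_1-d_1+1) \cdot x_2(x_2-1)\cdots(x_2-(p-d_1)+1).
\end{displaymath}

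The heart of the argument is then the congruence
\begin{displaymath}
\frac{1}{d_1!\,(p-d_1)!} \equiv \frac{(-1)^{d_1}}{d_1} \pmod{p},
\end{displaymath}
which I would derive as follows. Observe that $(p-1)(p-2)\cdots(p-k) \equiv (-1)^k k! \pmod{p}$, so $\binom{p-1}{k} \equiv (-1)^k \pmod{p}$. Combined with Wilson's theorem $(p-1)! \equiv -1 \pmod{p}$, the case $k = d_1 - 1$ gives $(d_1-1)!\,(p-d_1)! \equiv (-1)^{d_1} \pmod{p}$, so that $1/(d_1-1)!(p-d_1)! \equiv (-1)^{d_1} \pmod{p}$; multiplying by $1/d_1$ yields the claimed congruence. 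Interpreting this identity in $\mathbb{F}_p$ via Lemma \ref{lem:rational_number_as_element_of_Fp} replaces each coefficient $\bigl(1/(d_1!(p-d_1)!)\bigr)^{\langle p \rangle}$ by $(-1)^{d_1} (1/d_1)^{\langle p \rangle}$, giving exactly the formula in Theorem \ref{thm:p-ary_Hamming_weight__two_numbers}.

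There is no serious obstacle here: the reduction to a single-index sum is immediate from the constraint $d_1 + d_2 = p$, and the coefficient simplification is a one-line application of Wilson's theorem. The only point requiring a little care is to ensure that the passage $1/(d_1!(p-d_1)!) \mapsto (-1)^{d_1}/d_1$ takes place in $\mathbb{Q}$ before applying the map $a \mapsto a^{\langle p \rangle}$; equivalently, one should verify that both rational numbers have denominators coprime to $p$ (which holds since $1 \le d_1 \le p-1$ forces $d_1,d_1!,(p-d_1)!$ all coprime to $p$), so that Lemma \ref{lem:rational_number_as_element_of_Fp} legitimately transports the identity from $\mathbb{Q}$ to $\mathbb{F}_p$.
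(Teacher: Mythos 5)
Your proposal is correct and follows essentially the same route as the paper: specialize Theorem \ref{thm:p-ary_Hamming_weight} to $n=2$, $i=1$ (forcing $d_2 = p - d_1$), then simplify $\bigl(1/(d_1!(p-d_1)!)\bigr)^{\langle p \rangle}$ via Wilson's theorem and the congruence $\binom{p-1}{a} \equiv (-1)^a \pmod{p}$. The only cosmetic difference is that you use $\binom{p-1}{d_1-1}$ where the paper uses the identical coefficient $\binom{p-1}{p-d_1}$.
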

\begin{proof}
First we note that $(p-1)! \equiv (-1)^p \pmod{p}$; indeed, when $p$ is odd, the set $\mathbb{F}_p \setminus \{-1,0,1\}$ can be divided into disjoint subsets of the form $\{\alpha,\alpha^{-1}\}$ with $\alpha \neq \alpha^{-1}$.
For the formula in Theorem \ref{thm:p-ary_Hamming_weight}, we have $d_2 = p - d_1$ for the indices $d_1,d_2$, therefore $1 \leq d_1 \leq p-1$.
Now we have
\begin{displaymath}
\left( \frac{ 1 }{ d_1!d_2! } \right)^{\langle p \rangle}
= \left( \frac{ (-1)^p (p-1)! }{ d_1! (p-d_1)! } \right)^{\langle p \rangle} \\
= \left( \frac{ (-1)^p }{ d_1 } \binom{p-1}{p-d_1} \right)^{\langle p \rangle}
= \left( \frac{ (-1)^{d_1} }{ d_1 } \right)^{\langle p \rangle}
\end{displaymath}
where we used the fact that $\binom{p-1}{a} \equiv (-1)^a \pmod{p}$ for any $a \in [p-1]$.
Therefore the claim holds by Theorem \ref{thm:p-ary_Hamming_weight}.
\end{proof}

\subsection{Addition of $p$-ary Integers Based on Polynomials}
\label{subsec:polynomial_for_addition__algorithm_general}

We show an algorithm for addition of $p$-ary integers $a_h = (a_{h,m} \dots a_{h,1} a_{h,0})_p$, $h = 1,\dots,n$, based on the result of Section \ref{subsec:polynomial_for_addition__carry}, which has applications to cryptology as mentioned in the Introduction.
Here, as above, each digit $a_{h,i}$ of $a_h$ is represented by an element of $\mathbb{F}_p$.
First, let $d$ be the smallest non-negative integer satisfying that $(n + d)(p-1) < p^{d+1}$.
Now we have
\begin{displaymath}
\begin{split}
a_1 + \cdots + a_n \leq n (p^{m+1}-1)
&= n (p-1)(p^m + \cdots + p + 1) \\
&< p^{d+1}(p^m + \cdots + p + 1)
< p^{d+1} \cdot p^{m+1}
= p^{m+d+2} \enspace,
\end{split}
\end{displaymath}
therefore the result of the addition $c = a_1 + \cdots + a_n$ can be expressed by $m+d+2$ digits; $c = (c_{m+d+1} \cdots c_1c_0)_p$, $c_i \in \mathbb{F}_p$.
Then the digits of $c$ and the carries $\gamma_{j,k} \in \mathbb{F}_p$ ($0 \leq j < k \leq m + d + 1$, $k \leq j + d$) during the addition ($\gamma_{j,k}$ means the carry to $k$-th digit from the calculation at $j$-th digit) are calculated by using the algorithm shown in Figure \ref{fig:algorithm_addition}.
Note that we have $\varphi_k(a_{1,i},\dots,a_{n,i},\gamma_{i-d,i},\gamma_{i-(d-1),i},\dots,\gamma_{i-1,i}) = 0$ for $k > d$ by the above-mentioned property $(n+d)(p-1) < p^{d+1}$.
This implies that the algorithm calculates the sum of $a_1,\dots,a_n$ correctly.
\begin{figure}
\centering
\caption{Algorithm for $p$-ary integer addition based on polynomials; here $d$ denotes the smallest non-negative integer satisfying $(n + d)(p-1) < p^{d+1}$}\medskip
\label{fig:algorithm_addition}
\fbox{%
\begin{minipage}{350pt}
\noindent
\texttt{Input:} $a_h = (a_{h,m} \dots a_{h,1} a_{h,0})_p$ ($h \in \{1,\dots,n\}$, $a_{h,i} \in \mathbb{F}_p$)\medskip

\noindent
\texttt{Initialize the variables} $\gamma_{j,k}$ \texttt{as} $\gamma_{j,k} \leftarrow 0$ \\
\texttt{For} $i = 0,1,\dots,m+d+1$ \texttt{Do:} \\
\hspace*{1em} \texttt{Set} $c_i \leftarrow \varphi_0(a_{1,i},\dots,a_{n,i},\gamma_{i-d,i},\gamma_{i-(d-1),i},\dots,\gamma_{i-1,i})$ \\
\hspace*{2.5em} \texttt{/*} \quad Comment: Input variables $a_{1,i},\dots,a_{n,i}$ are ignored when $i > m$ \quad \texttt{*/} \\
\hspace*{2.5em} \texttt{/*} \quad Comment: Input variables $\gamma_{i-j,i}$ are ignored when $i-j < 0$ \quad \texttt{*/} \\
\hspace*{1em} \texttt{For} $k = 1,2,\dots,\min\{d,m+d+1-i\}$ \texttt{Do:} \\
\hspace*{2em} \texttt{Set} $\gamma_{i,i+k} \leftarrow \varphi_k(a_{1,i},\dots,a_{n,i},\gamma_{i-d,i},\gamma_{i-(d-1),i},\dots,\gamma_{i-1,i})$ \\
\hspace*{1em} \texttt{End Do} \\
\texttt{End Do} \\
\texttt{Output} $c = (c_{m+d+1} \cdots c_1c_0)_p$
\end{minipage}
}
\end{figure}

From now, we focus on the case of addition of two integers (i.e., $n = 2$).
We note that, in this case, owing to the relation $2(p-1) + 1 < p^2$, it suffices to consider the carries from each digit to the next digit only, and the value of each carry is either $0$ or $1$.
Now the polynomials used in the algorithm above can be slightly simplified as follows:

\begin{proposition}
\label{prop:polynomial_for_carry__two_numbers}
For $x_1,x_2 \in \mathbb{F}_p$ and $\gamma \in \{0,1\} \subset \mathbb{F}_p$, we have $\varphi_1(x_1,x_2,\gamma) = \varphi'(x_1,x_2,\gamma)$, where
\begin{displaymath}
\varphi'(x_1,x_2,\gamma)
= \varphi_1(x_1,x_2) + \gamma \cdot (1 - (x_1 + x_2 + 1)^{p-1}) \enspace.
\end{displaymath}
\end{proposition}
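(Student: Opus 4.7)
The plan is to verify the proposed identity by a direct case split on $\gamma \in \{0,1\}$, exploiting that the difference $\varphi_1(x_1,x_2,\gamma) - \varphi_1(x_1,x_2)$ is a characteristic function which admits a short polynomial expression via Fermat's Little Theorem. Note that the statement only asserts pointwise equality on $\gamma \in \{0,1\}$, not a polynomial identity on all of $(\mathbb{F}_p)^3$, so it suffices to check the two values of $\gamma$ separately.

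First, I would handle the trivial case $\gamma = 0$. By the very definition of $\varphi_1$ as the carry to the next digit of the integer sum, inserting a zero summand does not change the sum, so $\varphi_1(x_1,x_2,0) = \varphi_1(x_1,x_2)$. On the other side, the added term in $\varphi'$ vanishes since its coefficient is $\gamma = 0$, and both sides agree.

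Second, for $\gamma = 1$, I would compare the two carries. Since $2(p-1)+1 < p^2$, both $\varphi_1(x_1,x_2)$ and $\varphi_1(x_1,x_2,1)$ take values in $\{0,1\}$; we have $\varphi_1(x_1,x_2) = 1$ precisely when $(x_1)_{\mathbb{Z}} + (x_2)_{\mathbb{Z}} \geq p$, and $\varphi_1(x_1,x_2,1) = 1$ precisely when $(x_1)_{\mathbb{Z}} + (x_2)_{\mathbb{Z}} + 1 \geq p$. Their difference thus equals $1$ exactly on the diagonal $(x_1)_{\mathbb{Z}} + (x_2)_{\mathbb{Z}} = p-1$, i.e., when $x_1 + x_2 + 1 = 0$ in $\mathbb{F}_p$, and equals $0$ otherwise.

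Third, I would identify $1 - (x_1+x_2+1)^{p-1}$ with the characteristic function of that diagonal: by Fermat's Little Theorem, $(x_1+x_2+1)^{p-1}$ equals $0$ when $x_1+x_2+1 = 0$ in $\mathbb{F}_p$ and equals $1$ otherwise, so $1 - (x_1+x_2+1)^{p-1} = \chi[x_1+x_2+1 = 0]$. Combining with the previous step, $\varphi'(x_1,x_2,1) = \varphi_1(x_1,x_2) + \chi[x_1+x_2+1 = 0] = \varphi_1(x_1,x_2,1)$, which completes the verification. There is no real obstacle here; the proof is a two-line case analysis, and the only thing to highlight is that the restriction $\gamma \in \{0,1\}$ is essential, since for other values of $\gamma$ the right-hand side would not even be the correct $\mathbb{F}_p$-valued carry.
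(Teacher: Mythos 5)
Your proof is correct and follows essentially the same route as the paper: both identify the difference $\varphi_1(x_1,x_2,\gamma)-\varphi_1(x_1,x_2)$ as $\gamma\cdot\chi[x_1+x_2=p-1]$ and then express that characteristic function as $1-(x_1+x_2+1)^{p-1}$ via Fermat's Little Theorem. The explicit case split on $\gamma\in\{0,1\}$ is just a slightly more verbose packaging of the paper's one-line observation that the extra carry occurs exactly when $x_1+x_2=p-1$ and then equals $\gamma$.
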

\begin{proof}
In the calculation of $(x_1)_{\mathbb{Z}} \Zplus (x_2)_{\mathbb{Z}} \Zplus \gamma_{\mathbb{Z}}$, for each choice of $x_1,x_2$, the carry to the next digit for the case $\gamma = 1$ is different from that for the case $\gamma = 0$ if and only if $x_1 + x_2 = p-1$.
Moreover, in the case $x_1 + x_2 = p-1$, the carry is $1$ when $\gamma = 1$ and it is $0$ when $\gamma = 0$, i.e., it is equal to $\gamma$.
Since the carry when $\gamma = 0$ is nothing but $\varphi_1(x_1,x_2)$ for any $x_1,x_2$, we have
\begin{displaymath}
\varphi_1(x_1,x_2,\gamma) = \varphi_1(x_1,x_2) + \gamma \cdot \chi[x_1 + x_2 = p-1] \enspace,
\end{displaymath}
while we have $\chi[x_1 + x_2 = p-1] = 1 - (x_1+x_2+1)^{p-1}$ by Fermat's Little Theorem.
This completes the proof of Proposition \ref{prop:polynomial_for_carry__two_numbers}.
\end{proof}

Moreover, since $a_1 + a_2 \leq 2(p^{m+1}-1) \leq p(p^{m+1}-1) < p^{m+2}$, the sum $c = a_1 + a_2$ can be expressed by $m+2$ digits; $c = (c_{m+1} \cdots c_1c_0)_p$, $c_i \in \mathbb{F}_p$.
Now the addition of $a_1$ and $a_2$ can be calculated by the algorithm in Figure \ref{fig:algorithm_addition_two_values}.
\begin{figure}
\centering
\caption{Algorithm for addition of two $p$-ary integers based on polynomials}\medskip
\label{fig:algorithm_addition_two_values}
\fbox{%
\begin{minipage}{280pt}
\noindent
\texttt{Input:} $a_h = (a_{h,m} \dots a_{h,1} a_{h,0})_p$ ($h \in \{1,2\}$, $a_{h,i} \in \mathbb{F}_p$)\medskip

\noindent
\texttt{Set} $c_0 \leftarrow a_{1,0} + a_{2,0}$, $\gamma_{0,1} \leftarrow \varphi_1(a_{1,0},a_{2,0})$ \\
\texttt{For} $i = 1,\dots,m$ \texttt{Do:} \\
\hspace*{1em} \texttt{Set} $c_i \leftarrow a_{1,i} + a_{2,i} + \gamma_{i-1,i}$ \texttt{and} $\gamma_{i,i+1} \leftarrow \varphi'(a_{1,i},a_{2,i},\gamma_{i-1,i})$ \\
\texttt{End Do} \\
\texttt{Set} $c_{m+1} \leftarrow \gamma_{m,m+1}$ \\
\texttt{Output} $c = (c_{m+1} \cdots c_1c_0)_p$
\end{minipage}
}
\end{figure}


\section{Polynomial Expressions of Carries for Multiplication}
\label{sec:polynomial_for_multiplication}

In Section \ref{subsec:polynomial_for_multiplication__carry}, we determine the minimal polynomial expression of the function $\psi_1(x_1,\dots,x_n)$ that yields the carry to the next digit in the integer multiplication $(x_1)_{\mathbb{Z}} \Ztimes \cdots \Ztimes (x_n)_{\mathbb{Z}}$ (see \eqref{eq:definition_for_carry_function_for_multiplication} in the Introduction for the precise definition of $\psi_1$).
The other carry functions $\psi_i$ to higher digits, i.e., with $i \geq 2$, are not considered here and are left as a future research subject.
Here we assume $p > 2$, since the problem for the case $p = 2$ is trivial as mentioned in the Introduction (in fact, the assumption $p > 2$ is indeed used in our argument).
Then in Section \ref{subsec:polynomial_for_multiplication__algorithm}, we discuss an algorithm for multiplication of $p$-ary integers where each step is composed of polynomial evaluations.

\subsection{The Results}
\label{subsec:polynomial_for_multiplication__carry}

Here we determine the minimal polynomial expression of the function $\psi_1(x_1,\dots,x_n)$ defined above for $p > 2$.
The result is as follows (restatement of Theorem \ref{thm:carry_for_p-ary_multiplication__in_introduction} in the Introduction):

\begin{theorem}
\label{thm:carry_for_p-ary_multiplication}
Let $p$ be an odd prime.
Then the minimal polynomial expression of $\psi_1(x_1,\dots,x_n)$ is given by
\begin{equation}
\label{eq:carry_for_p-ary_multiplication__expression_by_Psi}
\psi_1(x_1,\dots,x_n) = x_1 \cdots x_n \left( \Psi(x_1 \cdots x_n) - \sum_{j=1}^{n} \Psi(x_j) + (n-1) \Psi(1) \right) \enspace,
\end{equation}
where $\Psi(t)$ is a polynomial defined by
\begin{equation}
\label{eq:carry_for_p-ary_multiplication__Psi}
\Psi(t) = \sum_{i=1}^{p-2} \left( \frac{ B_{p-1-i} }{ p-1-i } \right)^{\langle p \rangle} t^i
= \sum_{i=1}^{(p-3)/2} \left( \frac{ B_{p-1-2i} }{ p-1-2i } \right)^{\langle p \rangle} t^{2i} + \frac{ p-1 }{ 2 } t^{p-2}
\end{equation}
(see Section \ref{sec:preliminary} for the notation $a^{\langle p \rangle}$ for $a \in \mathbb{Q}$).
We also have
\begin{equation}
\label{eq:carry_for_p-ary_multiplication__Psi_constant}
\Psi(1) = (w_p)^{\langle p \rangle} = \left( B_{p-1} + \frac{ 1 }{ p } - 1 \right)^{\langle p \rangle} \enspace,
\end{equation}
where $w_p = ( (p-1)! + 1 )/p$ is Wilson's quotient.
\end{theorem}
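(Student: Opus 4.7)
The plan is to relate $\psi_1$ to Fermat quotients and then evaluate these in terms of Bernoulli numbers via Faulhaber's sum-of-powers formula. Recall that for an integer $a$ with $\gcd(a,p) = 1$, the Fermat quotient $q_p(a) := (a^{p-1} - 1)/p \in \mathbb{Z}$ satisfies the additivity $q_p(ab) \equiv q_p(a) + q_p(b) \pmod{p}$ and the shift formula $q_p(a + pk) \equiv q_p(a) - k \cdot a^{-1} \pmod{p}$, both of which follow directly from expanding the defining $(p-1)$-st powers. I would apply these to the integer $T = (x_1)_{\mathbb{Z}} \Ztimes \cdots \Ztimes (x_n)_{\mathbb{Z}}$, which by the definition of $\psi_i$ equals $(s)_{\mathbb{Z}} + p \cdot K$, where $s := x_1 \cdots x_n \in \mathbb{F}_p$ and the integer $K$ satisfies $K \equiv \psi_1(x_1,\dots,x_n)_{\mathbb{Z}} \pmod{p}$. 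For $s \neq 0$, additivity yields $q_p(T) \equiv \sum_{j} q_p((x_j)_{\mathbb{Z}}) \pmod{p}$, while the shift formula yields $q_p(T) \equiv q_p((s)_{\mathbb{Z}}) - \psi_1(x_1,\dots,x_n) \cdot s^{-1} \pmod{p}$. Equating and multiplying by $s$ gives
\begin{displaymath}
\psi_1(x_1,\dots,x_n) = x_1 \cdots x_n \Bigl( q_p((s)_{\mathbb{Z}}) - \sum_{j=1}^{n} q_p((x_j)_{\mathbb{Z}}) \Bigr) \quad \text{in } \mathbb{F}_p,
\end{displaymath}
and both sides vanish automatically when $s = 0$, so this identity holds on all of $(\mathbb{F}_p)^n$.

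Next I would express $a \mapsto q_p(a) \bmod p$ for $a \in \{1,\dots,p-1\}$ as a polynomial in $a$. The starting ingredients are Faulhaber's formula $\sum_{k=0}^{a-1} k^{p-1} = \tfrac{1}{p} \sum_{i=0}^{p-1} \binom{p}{i} B_i\, a^{p-i}$ and the elementary identity $\sum_{k=1}^{a-1} k^{p-1} \equiv a - 1 \pmod{p}$ (each summand is $\equiv 1$). Combining $\binom{p}{i}/p = \binom{p-1}{i-1}/i$ with $\binom{p-1}{i-1} \equiv (-1)^{i-1} \pmod{p}$ rewrites the right-hand side of Faulhaber in Bernoulli form; the subtle step is the two terms with $p$ in the denominator: the $i = 0$ term contributes $a^p/p = a/p + a\,q_p(a)$ (which is precisely what isolates $q_p$), and the $i = p-1$ term contributes $B_{p-1} a$, where by von Staudt--Clausen $R := B_{p-1} + 1/p \in \mathbb{Z}_{(p)}$. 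The two stray $a/p$ pieces cancel, and after reducing $a^{p-1} \equiv 1$ on $\mathbb{F}_p^{*}$ one obtains
\begin{displaymath}
q_p(a) \equiv \Psi(a) - (R - 1) \pmod{p} \qquad (a \in \{1,\dots,p-1\}),
\end{displaymath}
with $\Psi$ exactly the polynomial of the theorem. Evaluating at $a = 1$ (where $q_p(1) = 0$) gives $\Psi(1) = R - 1 = B_{p-1} + 1/p - 1$; identification with Wilson's quotient $w_p$ then follows by applying $q_p$ to $(p-1)! \equiv -1 + p\,w_p \pmod{p^2}$ (which yields $w_p \equiv \sum_{k=1}^{p-1} q_p(k) \pmod{p}$) and separately evaluating $S_{p-1}(p) \bmod p^2$ via Faulhaber to obtain the same sum $\equiv B_{p-1} + 1/p - 1 \pmod{p}$.

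Finally, substituting $q_p((a)_{\mathbb{Z}}) = \Psi(a) - \Psi(1)$ (valid for $a \in \mathbb{F}_p^{*}$) into the formula from the first paragraph gives
\begin{displaymath}
\psi_1 = x_1 \cdots x_n \bigl( \Psi(s) - \Psi(1) \bigr) - x_1 \cdots x_n \sum_{j=1}^{n} \bigl( \Psi(x_j) - \Psi(1) \bigr) = x_1 \cdots x_n \Bigl( \Psi(s) - \sum_{j=1}^{n} \Psi(x_j) + (n-1) \Psi(1) \Bigr),
\end{displaymath}
which is the desired formula. Minimality then follows from Proposition~\ref{prop:unique_polynomial_expression}: since $\deg \Psi \le p - 2$, each summand $x_1 \cdots x_n \cdot \Psi(x_j)$ has degree at most $(p-2)+1 = p-1$ in $x_j$ and at most $1$ in every other variable, and likewise $x_1 \cdots x_n \cdot \Psi(s)$ expands into monomials of degree $\le p-1$ in each variable. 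The main obstacle will be the Bernoulli extraction in the middle paragraph: the careful $p$-adic book-keeping needed to separate $q_p(a)$ from the rational terms in Faulhaber's identity, and the check that the resulting coefficients are exactly $(B_{p-1-i}/(p-1-i))^{\langle p \rangle}$ as stated.
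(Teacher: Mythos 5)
Your proposal is correct, and it takes a genuinely different route from the paper. The paper first pins down the \emph{shape} of $\psi_1$ by a structural argument: for $n=2$ it writes $\psi_1$ in the basis of monomials, uses the associativity relation \eqref{eq:psi_almost_2-cocycle} (a $2$-cocycle condition) to kill all coefficients $\alpha_{i,j}$ with $i\neq j$, and then extends to general $n$ by induction; only afterwards does it compute the coefficients of $\Psi$, by evaluating $\psi_1(x,\xi)$ at a primitive root $\xi$, expanding the carry as a sum of characteristic functions of intervals, and invoking the power-sum formula \eqref{eq:power_sum_by_Bernoulli_polynomial} together with the multiplication theorem for Bernoulli polynomials (Proposition~\ref{prop:multiplication_theorem}) and some bookkeeping with $\lceil kp/\xi_{\mathbb{Z}}\rceil$. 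You instead obtain the identity $\psi_1 = x_1\cdots x_n\bigl(q_p((s)_{\mathbb{Z}})-\sum_j q_p((x_j)_{\mathbb{Z}})\bigr)$ in one stroke for all $n$ from the multiplicativity and shift properties of the Fermat quotient, which eliminates both the cocycle/coefficient-comparison step and the induction, and then reduces everything to the single computation $q_p(a)\equiv \Psi(a)-\Psi(1)$ via Faulhaber. I checked that computation: the substitution $\ell=p-1-i$ with $\binom{p-1}{i-1}\equiv(-1)^{i-1}$ reproduces the coefficients $(B_{p-1-\ell}/(p-1-\ell))^{\langle p\rangle}$ for even $\ell$, and for $\ell=p-2$ the $+\tfrac12$ from the $B_1$ term combines with the $-a^{-1}=-a^{p-2}$ from $(a-1)/a$ to give $-\tfrac12=\tfrac{p-1}{2}$, exactly as in \eqref{eq:carry_for_p-ary_multiplication__Psi}; the two $a/p$ singular pieces (from $i=0$ and from von Staudt--Clausen at $i=p-1$) do cancel as you say. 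It is worth noting that the relation $q_p(x_{\mathbb{Z}})^{\langle p\rangle}=\Psi(x)-\Psi(1)$ appears in the paper only as a \emph{consequence} at the end of its proof (equation \eqref{eq:rem:relation_to_group_cohomology__Fermat_quotient}) and in the cohomological discussion of the Appendix; you have inverted the logic and made it the engine of the whole argument. What the paper's route buys is the structural insight (the $2$-cocycle/coboundary interpretation and the a priori form of $\Psi$ without knowing its coefficients); what yours buys is brevity and elementarity --- no primitive roots, no multiplication theorem, no induction on $n$. Your minimality argument via Proposition~\ref{prop:unique_polynomial_expression} (each variable appears with degree at most $p-1$) and the treatment of the case where some $x_j=0$ are both fine.
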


We recall that we are using the convention $B_1 = -1/2$ (rather than $B_1 = 1/2$) for the Bernoulli numbers $B_{\ell}$, i.e., $t / (e^t - 1) = \sum_{m \geq 0} B_m t^m / m!$.
By this and the fact that $B_{\ell} = 0$ for odd indices $\ell > 1$, the second equality in \eqref{eq:carry_for_p-ary_multiplication__Psi} follows immediately from the first equality.
On the other hand, the second equality in \eqref{eq:carry_for_p-ary_multiplication__Psi_constant} is nothing but the following known relation \cite{Gla}: $w_p \equiv B_{p-1} + 1/p - 1 \pmod{p}$ for any prime $p$.

We divide the remaining proof of Theorem \ref{thm:carry_for_p-ary_multiplication} into the following three steps:

\begin{lemma}
\label{lem:carry_for_p-ary_multiplication__step1}
In the situation of Theorem \ref{thm:carry_for_p-ary_multiplication}, if $n = 2$, then the function $\psi_1(x_1,\dots,x_n)$ can be written as \eqref{eq:carry_for_p-ary_multiplication__expression_by_Psi} for some polynomial $\Psi(t)$ of degree at most $p-2$ with no constant term.
\end{lemma}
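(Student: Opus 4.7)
The plan is to use multiplicative associativity $(x_1 x_2) x_3 = x_1 (x_2 x_3)$ in $\mathbb{Z}$ to extract a two-cocycle identity for the carry $\psi_1$, and then to cobound this cocycle by an averaging argument that exploits the invertibility of $p - 1$ in $\mathbb{F}_p$.

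First, since $\psi_1(0, x_2) = \psi_1(x_1, 0) = 0$ (the integer product vanishes whenever a factor does), the minimal polynomial expression of $\psi_1$ is divisible by $x_1 x_2$, so I would write $\psi_1(x_1, x_2) = x_1 x_2\,\Phi(x_1, x_2)$ for some $\Phi \in \mathbb{F}_p[x_1, x_2]$. Next I would expand $(x_1)_{\mathbb{Z}}(x_2)_{\mathbb{Z}}(x_3)_{\mathbb{Z}}$ two ways modulo $p^2$ using $(a)_{\mathbb{Z}}(b)_{\mathbb{Z}} = \psi_0(a, b)_{\mathbb{Z}} + p\,\psi_1(a, b)_{\mathbb{Z}}$ and $\psi_0(a, b) = ab$ in $\mathbb{F}_p$; equating the two expansions yields
\[
x_3\,\psi_1(x_1, x_2) + \psi_1(x_1 x_2, x_3) = x_1\,\psi_1(x_2, x_3) + \psi_1(x_1, x_2 x_3).
\]
Dividing by $x_1 x_2 x_3$ on $(\mathbb{F}_p^*)^3$ gives the symmetric, normalized $2$-cocycle identity
\[
\Phi(x_1, x_2) + \Phi(x_1 x_2, x_3) = \Phi(x_2, x_3) + \Phi(x_1, x_2 x_3),
\]
together with $\Phi(x, 1) = 0$ coming from $\psi_1(x, 1) = 0$.

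To cobound this cocycle, I would exploit that $p - 1$ is invertible in $\mathbb{F}_p$ and set $\Psi_0(x) := (p-1)^{-1} \sum_{z \in \mathbb{F}_p^*} \Phi(x, z)$ for $x \in \mathbb{F}_p^*$. Summing the cocycle identity over $x_3$ and applying the substitution $x_3 \mapsto x_2^{-1} x_3$ in the last term yields $\Phi(x_1, x_2) = \Psi_0(x_1) + \Psi_0(x_2) - \Psi_0(x_1 x_2)$ on $\mathbb{F}_p^* \times \mathbb{F}_p^*$, with $\Psi_0(1) = 0$. It remains to realize $-\Psi_0$ as the restriction of a polynomial of the required form. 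Consider the linear map $T$ sending $\Psi(t) = \sum_{i=1}^{p-2} a_i t^i$ to the function $x \mapsto \Psi(x) - \Psi(1)$ on $\mathbb{F}_p^*$; both its source and target (functions on $\mathbb{F}_p^*$ vanishing at $1$) are $(p-2)$-dimensional, and $T$ is injective because any polynomial of degree at most $p-2$ with $p-1$ roots is zero, which together with the no-constant-term condition forces $\Psi = 0$. Hence $T$ is a bijection, so I can pick $\Psi$ with $\Psi(x) - \Psi(1) = -\Psi_0(x)$ on $\mathbb{F}_p^*$. Then
\[
\Psi(x_1 x_2) - \Psi(x_1) - \Psi(x_2) + \Psi(1) = \Psi_0(x_1) + \Psi_0(x_2) - \Psi_0(x_1 x_2) = \Phi(x_1, x_2),
\]
and multiplying by $x_1 x_2$ gives the claimed formula on $(\mathbb{F}_p^*)^2$; both sides vanish when $x_1 = 0$ or $x_2 = 0$, so the identity extends to all of $\mathbb{F}_p^2$.

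The main obstacle I anticipate is the explicit integration of the cocycle (the averaging step), where the cocycle identity must be massaged into an exact cobounding formula; this is really the vanishing of $H^2(\mathbb{F}_p^*, \mathbb{F}_p)$ packaged concretely via the invertibility of $p-1$. The subsequent polynomial-extension step is a short linear-algebra check. Note that this argument only establishes existence of \emph{some} $\Psi$ of the prescribed shape; identifying the specific Bernoulli-number formula of Theorem \ref{thm:carry_for_p-ary_multiplication} requires a further calculation starting from the averaged expression for $\Psi_0$, which I expect to be handled in later steps of the proof.
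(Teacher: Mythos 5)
Your proof is correct, but the cobounding step takes a genuinely different route from the one in the paper's proof of this lemma. Both arguments begin identically: divisibility of the minimal polynomial expression by $x_1x_2$, and the associativity identity \eqref{eq:psi_almost_2-cocycle} obtained by expanding the triple product modulo $p^2$ in two ways. From there the paper stays entirely at the level of coefficients: it writes $\psi_1(x,y)=\sum_{i,j}\alpha_{i,j}x^iy^j$, uses Proposition \ref{prop:unique_polynomial_expression} to upgrade the functional identity in $x,y,z$ to an identity of polynomials (both sides having degree at most $p-1$ in each variable), and compares coefficients of $x^iy^jz$ to get $\alpha_{i,j}=0$ for $2\le i\ne j$ and $\alpha_{i,1}=-\alpha_{i,i}$, so that $\Psi(t)=\sum_i\alpha_{i+1,i+1}t^i$ appears directly. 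You instead pass to the normalized $2$-cocycle $\Phi=\psi_1/(x_1x_2)$ on the group $(\mathbb{F}_p)^{\times}$ and cobound it by averaging over the group, exploiting that $|(\mathbb{F}_p)^{\times}|=p-1$ is invertible in $\mathbb{F}_p$; this is exactly the explicit form of the vanishing of $H^2((\mathbb{F}_p)^{\times},\mathbb{Z}/p\mathbb{Z})$ that the paper records (via Schur--Zassenhaus) in its Appendix, so your argument is in effect a self-contained realization of Remark \ref{rem:relation_to_group_cohomology}. The extra costs of your route are the two bookkeeping steps you correctly supply: the dimension count showing that the function $-\Psi_0$ on $(\mathbb{F}_p)^{\times}$ vanishing at $1$ is realized by a unique polynomial of degree at most $p-2$ with no constant term, and the check that both sides vanish on the coordinate hyperplanes so the identity extends from $((\mathbb{F}_p)^{\times})^2$ to $(\mathbb{F}_p)^2$. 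What it buys is conceptual transparency and a closed form $\Psi_0(x)=-\sum_{z\neq 0}\Phi(x,z)$ for the cobounding function (note $(p-1)^{-1}=-1$ in $\mathbb{F}_p$), which could in principle feed directly into the Bernoulli-number computation; the paper's coefficient comparison is more elementary and hands the later steps the coefficients $\beta_i=\alpha_{i+1,i+1}$ without any detour through functions on $(\mathbb{F}_p)^{\times}$.
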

\begin{proof}
By Proposition \ref{prop:unique_polynomial_expression}, we can write $\psi_1(x,y)$ uniquely as $\psi_1(x,y) = \sum_{i,j=0}^{p-1} \alpha_{i,j} x^i y^j$ with $\alpha_{i,j} \in \mathbb{F}_p$.
Note that $\alpha_{i,j} = \alpha_{j,i}$, since the multiplication is symmetric.
From now, we investigate the coefficients $\alpha_{i,j}$.

First, note that $\psi_1(x,y) = 0$ if $y = 0$.
This implies that $\psi_1(x,0) = \sum_{i=0}^{p-1} \alpha_{i,0} x^i$ is the minimal polynomial expression of the zero function, therefore it is the zero polynomial by Proposition \ref{prop:unique_polynomial_expression}.
Hence, we have $\alpha_{i,0} = 0$, therefore $\alpha_{0,i} = 0$, for any index $i$.

Secondly, for any $x,y,z \in \mathbb{F}_p$, we have
\begin{equation}
\label{eq:associativity_right}
\begin{split}
(x_{\mathbb{Z}} \Ztimes y_{\mathbb{Z}}) \Ztimes z_{\mathbb{Z}}
&= \bigl( \psi_1(x,y)_{\mathbb{Z}} \Ztimes p \Zplus (xy)_{\mathbb{Z}} \bigr) \Ztimes z_{\mathbb{Z}} \\
&= \bigl( \psi_1(x,y)_{\mathbb{Z}} \Ztimes z_{\mathbb{Z}} \bigr) \Ztimes p \Zplus (xy)_{\mathbb{Z}} \Ztimes z_{\mathbb{Z}} \\
&\equiv \bigl( \psi_1(x,y) \cdot z \bigr)_{\mathbb{Z}} \Ztimes p \Zplus \psi_1(xy,z)_{\mathbb{Z}} \Ztimes p \Zplus ((xy)z)_{\mathbb{Z}} \pmod{p^2} \\
&\equiv \bigl( \psi_1(x,y) \cdot z + \psi_1(xy,z) \bigr)_{\mathbb{Z}} \Ztimes p \Zplus (xyz)_{\mathbb{Z}} \pmod{p^2} \enspace,
\end{split} 
\end{equation}
and similarly
\begin{equation}
\label{eq:associativity_left}
x_{\mathbb{Z}} \Ztimes (y_{\mathbb{Z}} \Ztimes z_{\mathbb{Z}})
\equiv \bigl( x \cdot \psi_1(y,z) + \psi_1(x,yz) \bigr)_{\mathbb{Z}} \Ztimes p \Zplus (xyz)_{\mathbb{Z}} \pmod{p^2} \enspace.
\end{equation}
By the associativity of multiplication, \eqref{eq:associativity_right} and \eqref{eq:associativity_left} are equal to each other.
Hence, by comparing the digits at the $p^1$'s places of \eqref{eq:associativity_right} and \eqref{eq:associativity_left}, we have
\begin{equation}
\label{eq:psi_almost_2-cocycle}
\psi_1(x,y) \cdot z + \psi_1(xy,z) = x \cdot \psi_1(y,z) + \psi_1(x,yz) \mbox{ for any } x,y,z \in \mathbb{F}_p \enspace,
\end{equation}
therefore, for any $x,y,z \in \mathbb{F}_p$, we have
\begin{equation}
\label{eq:relation_from_associativity}
\sum_{i,j=1}^{p-1} \alpha_{i,j} x^i y^j z + \sum_{i,j=1}^{p-1} \alpha_{i,j} x^i y^i z^j
= \sum_{i,j=1}^{p-1} \alpha_{i,j} x y^i z^j + \sum_{i,j=1}^{p-1} \alpha_{i,j} x^i y^j z^j \enspace.
\end{equation}
Since the degrees of the both sides with respect to each variable are at most $p-1$, Proposition \ref{prop:unique_polynomial_expression} implies that these are equivalent as polynomials.
Then, for $i,j \geq 2$ with $i \neq j$, by comparing the coefficients of $x^i y^j z$ in both sides of \eqref{eq:relation_from_associativity}, we have $\alpha_{i,j} = 0$.
On the other hand, for $i \geq 2$, by comparing the coefficients of $x^i y^i z$ in both sides of \eqref{eq:relation_from_associativity}, we have $\alpha_{i,i} + \alpha_{i,1} = 0$, therefore $\alpha_{i,1} = - \alpha_{i,i}$.
We also have $\alpha_{1,i} = - \alpha_{i,i}$ by the symmetry.
Summarizing the argument above, we have
\begin{equation}
\label{eq:expression_by_auxiliary_polynomial}
\begin{split}
\psi_1(x,y)
&= \alpha_{1,1} xy + \sum_{i=2}^{p-1} \alpha_{i,i} \bigl( x^iy^i - x^iy - xy^i \bigr) \\
&= xy \bigl( \Psi(xy) - \Psi(x) - \Psi(y) + \alpha_{1,1} \bigr) \enspace,
\end{split}
\end{equation}
where we define $\Psi(t) := \sum_{i=1}^{p-2} \alpha_{i+1,i+1} t^i$, which is a polynomial of degree at most $p-2$ with no constant term.
Now we have
\begin{displaymath}
0 = \psi_1(1,1) = \Psi(1) - \Psi(1) - \Psi(1) + \alpha_{1,1} = \alpha_{1,1} - \Psi(1) \enspace,
\end{displaymath}
therefore $\alpha_{1,1} = \Psi(1)$.
Hence Lemma \ref{lem:carry_for_p-ary_multiplication__step1} holds.
\end{proof}

\begin{lemma}
\label{lem:carry_for_p-ary_multiplication__step2}
In the situation of Theorem \ref{thm:carry_for_p-ary_multiplication}, for any $n \geq 1$, the function $\psi_1(x_1,\dots,x_n)$ can be written as \eqref{eq:carry_for_p-ary_multiplication__expression_by_Psi} for some polynomial $\Psi$ of degree at most $p-2$ with no constant term which is independent of $n$.
\end{lemma}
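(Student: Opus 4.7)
The plan is to prove Lemma \ref{lem:carry_for_p-ary_multiplication__step2} by induction on $n$, taking the polynomial $\Psi$ to be the one produced by Lemma \ref{lem:carry_for_p-ary_multiplication__step1} (applied to the case $n=2$); the content of the lemma is then that this same $\Psi$ works for every $n \geq 1$. The base case $n=1$ is trivial since $\psi_1(x_1) = 0$ and the right-hand side of \eqref{eq:carry_for_p-ary_multiplication__expression_by_Psi} also vanishes (the factor $\Psi(x_1) - \Psi(x_1) + 0 \cdot \Psi(1)$ is zero), and the base case $n = 2$ is exactly Lemma \ref{lem:carry_for_p-ary_multiplication__step1}.

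For the inductive step ($n \geq 3$), the first task is to derive a recursion expressing $\psi_1(x_1,\dots,x_n)$ in terms of $\psi_1(x_1,\dots,x_{n-1})$ and $\psi_1(x_1 \cdots x_{n-1}, x_n)$. By the definition of $\psi_0,\psi_1$, we have
\begin{displaymath}
\Zprod{j=1}{n-1}(x_j)_{\mathbb{Z}} \equiv (x_1 \cdots x_{n-1})_{\mathbb{Z}} \Zplus \psi_1(x_1,\dots,x_{n-1})_{\mathbb{Z}} \Ztimes p \pmod{p^2} \enspace.
\end{displaymath}
Multiplying by $(x_n)_{\mathbb{Z}}$ and expanding $(x_1\cdots x_{n-1})_{\mathbb{Z}} \Ztimes (x_n)_{\mathbb{Z}}$ via $\psi_1(x_1\cdots x_{n-1},x_n)$, then comparing the $p^1$-digits of both sides modulo $p^2$ (the same trick as in \eqref{eq:associativity_right}--\eqref{eq:psi_almost_2-cocycle}), yields the identity
\begin{displaymath}
\psi_1(x_1,\dots,x_n) = \psi_1(x_1 \cdots x_{n-1}, x_n) + x_n \cdot \psi_1(x_1,\dots,x_{n-1}) \enspace.
\end{displaymath}

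Next, I would apply the inductive hypothesis to $\psi_1(x_1,\dots,x_{n-1})$ and Lemma \ref{lem:carry_for_p-ary_multiplication__step1} to $\psi_1(x_1 \cdots x_{n-1}, x_n)$ (viewing the latter as a two-argument $\psi_1$ in the values $x_1 \cdots x_{n-1}$ and $x_n$, which is valid since the recursion in Lemma \ref{lem:carry_for_p-ary_multiplication__step1} is a pointwise identity on $\mathbb{F}_p \times \mathbb{F}_p$). This yields
\begin{displaymath}
\psi_1(x_1\cdots x_{n-1},x_n) = x_1\cdots x_n\bigl(\Psi(x_1\cdots x_n)-\Psi(x_1\cdots x_{n-1})-\Psi(x_n)+\Psi(1)\bigr)
\end{displaymath}
and
\begin{displaymath}
x_n \cdot \psi_1(x_1,\dots,x_{n-1}) = x_1\cdots x_n\Bigl(\Psi(x_1\cdots x_{n-1}) - \sum_{j=1}^{n-1}\Psi(x_j) + (n-2)\Psi(1)\Bigr) \enspace.
\end{displaymath}
Adding these, the $\Psi(x_1\cdots x_{n-1})$ terms cancel, and the constant terms combine as $\Psi(1) + (n-2)\Psi(1) = (n-1)\Psi(1)$, giving the desired formula \eqref{eq:carry_for_p-ary_multiplication__expression_by_Psi} for $n$.

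The only subtle point is that the same polynomial $\Psi$ from Lemma \ref{lem:carry_for_p-ary_multiplication__step1} is reused throughout; this is automatic once we observe that Lemma \ref{lem:carry_for_p-ary_multiplication__step1} determines $\Psi$ from $\psi_1$ on $\mathbb{F}_p \times \mathbb{F}_p$ alone and that Lemma \ref{lem:carry_for_p-ary_multiplication__step1} is being applied to the two-variable $\psi_1$ itself, not to a different function. The main obstacle, such as it is, is deriving the recursion cleanly: the calculation modulo $p^2$ must be handled carefully so as not to confuse operations in $\mathbb{Z}$ with operations in $\mathbb{F}_p$, exactly as in the proof of Lemma \ref{lem:carry_for_p-ary_multiplication__step1}. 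Once the recursion is in hand, the rest is routine substitution and bookkeeping of constant terms.
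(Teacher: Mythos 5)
Your proposal is correct and follows essentially the same route as the paper: the same recursion $\psi_1(x_1,\dots,x_n) = \psi_1(x_1,\dots,x_{n-1}) \cdot x_n + \psi_1(x_1 \cdots x_{n-1},x_n)$ derived by the mod-$p^2$ digit comparison, followed by the same substitution of the inductive hypothesis and the $n=2$ case with cancellation of the $\Psi(x_1\cdots x_{n-1})$ terms. Your explicit remark that the single polynomial $\Psi$ from Lemma \ref{lem:carry_for_p-ary_multiplication__step1} is reused throughout is a point the paper leaves implicit, but the argument is the same.
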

\begin{proof}
For the case $n = 1$, we have $\psi_1(x_1) = 0$ by the definition, while the right-hand side of \eqref{eq:carry_for_p-ary_multiplication__expression_by_Psi} becomes zero for an arbitrary choice of $\Psi$.
Therefore, the claim is trivial when $n = 1$.
The case $n = 2$ has been shown in Lemma \ref{lem:carry_for_p-ary_multiplication__step1}.
We prove the claim for the case $n \geq 3$ by induction.
We have
\begin{displaymath}
\begin{split}
(x_1)_{\mathbb{Z}} \Ztimes \cdots \Ztimes (x_{n-1})_{\mathbb{Z}} \Ztimes (x_n)_{\mathbb{Z}}
&= \bigl( (x_1)_{\mathbb{Z}} \Ztimes \cdots \Ztimes (x_{n-1})_{\mathbb{Z}} \bigr) \Ztimes (x_n)_{\mathbb{Z}} \\
&\equiv \bigl( \psi_1(x_1,\dots,x_{n-1})_{\mathbb{Z}} \Ztimes p \Zplus (x_1 \cdots x_{n-1})_{\mathbb{Z}} \bigr) \Ztimes (x_n)_{\mathbb{Z}} \pmod{p^2} \\
&= \psi_1(x_1,\dots,x_{n-1})_{\mathbb{Z}} \Ztimes (x_n)_{\mathbb{Z}} \Ztimes p \Zplus (x_1 \cdots x_{n-1})_{\mathbb{Z}} \Ztimes (x_n)_{\mathbb{Z}} \enspace.
\end{split}
\end{displaymath}
Now we have
\begin{displaymath}
\psi_1(x_1,\dots,x_{n-1})_{\mathbb{Z}} \Ztimes (x_n)_{\mathbb{Z}} \equiv \bigl( \psi_1(x_1,\dots,x_{n-1}) \cdot x_n \bigr)_{\mathbb{Z}} \pmod{p}
\end{displaymath}
and
\begin{displaymath}
(x_1 \cdots x_{n-1})_{\mathbb{Z}} \Ztimes (x_n)_{\mathbb{Z}} \equiv \psi_1(x_1 \cdots x_{n-1},x_n)_{\mathbb{Z}} \Ztimes p \Zplus (x_1 \cdots x_n)_{\mathbb{Z}} \pmod{p^2} \enspace.
\end{displaymath}
Since $a_{\mathbb{Z}} \Zplus b_{\mathbb{Z}} \equiv (a+b)_{\mathbb{Z}} \pmod{p}$ for any $a,b \in \mathbb{F}_p$, the combination of the equalities above implies that
\begin{displaymath}
\begin{split}
&(x_1)_{\mathbb{Z}} \Ztimes \cdots \Ztimes (x_{n-1})_{\mathbb{Z}} \Ztimes (x_n)_{\mathbb{Z}} \\
&\equiv \bigl( \psi_1(x_1,\dots,x_{n-1}) \cdot x_n + \psi_1( x_1 \cdots x_{n-1},x_n) \bigr)_{\mathbb{Z}} \Ztimes p \Zplus (x_1 \cdots x_n)_{\mathbb{Z}} \pmod{p^2} \enspace,
\end{split}
\end{displaymath}
therefore we have
\begin{displaymath}
\psi_1(x_1,\dots,x_n) = \psi_1(x_1,\dots,x_{n-1}) \cdot x_n + \psi_1( x_1 \cdots x_{n-1},x_n) \enspace.
\end{displaymath}
Now the induction hypothesis implies that the right-hand side is equal to
\begin{displaymath}
\begin{split}
&x_1 \cdots x_{n-1} \left( \Psi(x_1 \cdots x_{n-1}) - \sum_{j=1}^{n-1} \Psi(x_j) + (n-2) \Psi(1) \right) \cdot x_n \\
&\quad + (x_1 \cdots x_{n-1})x_n \bigl( \Psi((x_1 \cdots x_{n-1})x_n) - \Psi(x_1 \cdots x_{n-1}) - \Psi(x_n) + \Psi(1) \bigr) \\
&= x_1 \cdots x_n \left( \Psi(x_1 \cdots x_n) - \sum_{j=1}^{n} \Psi(x_j) + (n-1) \Psi(1) \right) \enspace,
\end{split}
\end{displaymath}
as desired.
Hence Lemma \ref{lem:carry_for_p-ary_multiplication__step2} holds.
\end{proof}

Before moving to the final step of the proof of Theorem \ref{thm:carry_for_p-ary_multiplication}, we note some properties of the Bernoulli polynomials $B_m(x)$, which is defined in terms of the Bernoulli numbers $B_{\ell} = B_{\ell}(0) \in \mathbb{Q}$ by
\begin{equation}
\label{eq:definition_of_Bernoulli_polynomial}
B_m(x) = \sum_{s=0}^{m} \binom{m}{s} B_{m-s} x^s \enspace.
\end{equation}
First, we note the following consequence of the von Staudt--Clausen Theorem (see e.g., Chapter 15 of \cite{IR}):

\begin{proposition}
\label{prop:Staudt-Clausen}
For any even integer $\ell > 0$, the denominator of $B_{\ell}$ is the product of all primes $q$ for which $q-1$ divides $\ell$.
\end{proposition}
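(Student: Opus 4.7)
The plan is to derive the stated description of the denominator of $B_\ell$ from the full von Staudt--Clausen identity
\[
B_\ell + \sum_{\substack{q \text{ prime}\\ (q-1)\mid \ell}} \frac{1}{q} \in \mathbb{Z},
\]
which holds for every positive even integer $\ell$. Granting this identity, clearing denominators shows that the denominator of $B_\ell$ divides $\prod_{(q-1) \mid \ell} q$; conversely, for each prime $q$ occurring in that product, the single term $1/q$ is not $q$-integral while all other summands in the identity are, forcing $q$ to appear in the denominator of $B_\ell$ to exactly the first power. Hence the denominator equals $\prod_{(q-1) \mid \ell} q$, as claimed.

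To prove the displayed identity, I would work prime by prime: fix a prime $q$ and show that the rational $q B_\ell + \chi[(q-1)\mid \ell]$ has $q$-adic valuation at least $1$. The main tool is the Faulhaber-type summation
\[
\sum_{a=0}^{q-1} a^\ell = q\, B_\ell + \frac{1}{\ell+1}\sum_{k=0}^{\ell-1} \binom{\ell+1}{k} B_k\, q^{\ell+1-k}
\]
together with the Fermat-little-theorem evaluation
\[
\sum_{a=0}^{q-1} a^\ell \equiv -\chi[(q-1)\mid \ell] \pmod{q},
\]
obtained by writing $a^\ell \equiv a^{\ell \bmod (q-1)}$ for $a \in \{1,\dots,q-1\}$ and summing against a primitive root mod $q$. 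Subtracting the two displays, the claim reduces to showing that the correction sum has $q$-adic valuation at least $1$. I would establish this by strong induction on $\ell$: the inductive hypothesis gives $v_q(B_k) \geq -1$ for $k < \ell$, while rewriting $\binom{\ell+1}{k}/(\ell+1) = \binom{\ell}{k-1}/k$ (for $k \geq 1$) absorbs the problematic factor $1/(\ell+1)$ into the binomial coefficient.

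The main obstacle is the sharp $q$-adic bookkeeping in the correction sum, since the naive valuation bound is not tight for terms with $k$ close to $\ell$ and $v_q(k) > 0$. Two observations rescue the argument: (i) $B_k = 0$ for odd $k > 1$, which eliminates the most dangerous boundary term $k = \ell - 1$ whenever $\ell$ is even; and (ii) Kummer's theorem expressing $v_q(\binom{\ell}{k-1})$ via the number of carries in base-$q$ addition of $k-1$ and $\ell-k+1$ provides extra positive contributions that compensate any $v_q(k) > 0$ appearing in the denominator. The prime $q = 2$ requires slightly more care, because several of the $B_k$ may simultaneously contribute a factor $1/2$; this is handled either by a direct check for small $\ell$ to anchor the induction or by refining the valuation estimates at $q = 2$. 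Combining these ingredients yields the required bound for every term in the correction sum, completing the induction and hence the proof.
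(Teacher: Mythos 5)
The paper does not actually prove this proposition: it is quoted as a known consequence of the von Staudt--Clausen theorem, with a pointer to Chapter 15 of Ireland--Rosen, so any proof you supply is extra work relative to the source. What you outline is essentially the standard textbook proof of von Staudt--Clausen, and its skeleton is correct: the reduction from the integrality statement $B_\ell+\sum_{(q-1)\mid\ell}1/q\in\mathbb{Z}$ to the exact denominator (each $1/q$ forces $q$-adic valuation exactly $-1$, all other primes give valuation $\ge 0$), the Faulhaber identity, the evaluation $\sum_{a=0}^{q-1}a^\ell\equiv-\chi[(q-1)\mid\ell]\pmod q$ via a primitive root, and the strong induction giving $v_q(B_k)\ge-1$ for $k<\ell$ are all sound. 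The one step you assert rather than prove is the $q$-adic bound on the correction terms, and your normalization makes it look harder than it is: writing $\binom{\ell+1}{k}/(\ell+1)=\binom{\ell}{k-1}/k$ leaves you fighting $v_q(k)$ against only $q^{\ell+1-k}$, which genuinely fails termwise (e.g.\ $q=2$, $\ell=6$, $k=4$), and the appeal to Kummer's theorem to supply compensating carries is never verified. The clean fix is the other factorization $\binom{\ell+1}{k}/(\ell+1)=\binom{\ell}{k}/(\ell+1-k)$: with $j=\ell+1-k\ge 2$ each correction term becomes $\binom{\ell}{j-1}B_{\ell+1-j}\,q^{j}/j$, and since $v_q(j)\le j-2$ for every $j\ge 2$ with the single exception $(j,q)=(2,2)$, the induction hypothesis $v_q(B_{\ell+1-j})\ge-1$ immediately gives each term valuation $\ge 1$. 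The exceptional case $(j,q)=(2,2)$ is precisely your observation (i): the Bernoulli factor there is $B_{\ell-1}$, which vanishes for even $\ell\ge 4$, and for $\ell=2$ the term $-q^2/2=-2$ is harmless. With that substitution the Kummer detour disappears and the argument closes; as written, the final estimate is plausible and true but not established.
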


By Proposition \ref{prop:Staudt-Clausen} and the fact that $B_0 = 1$, $B_1 = -1/2$ and $B_{\ell} = 0$ for every odd index $\ell > 1$, it follows that, for any odd prime $p$, the denominators of $B_0,B_1,\dots,B_{p-3}$ are all coprime to $p$.
Hence, Lemma \ref{lem:rational_number_as_element_of_Fp} can be applied to the Bernoulli polynomials $B_m(x)$ with $0 \leq m \leq p-3$.
In particular, for $a,b \in \mathbb{Q}$ with denominators being coprime to $p$, if $0 \leq m \leq p-3$ and $a^{\langle p \rangle} = b^{\langle p \rangle}$, then we have
\begin{displaymath}
B_m(a)^{\langle p \rangle}
= B_m{}^{\langle p \rangle}(a^{\langle p \rangle})
= B_m{}^{\langle p \rangle}(b^{\langle p \rangle})
= B_m(b)^{\langle p \rangle} \enspace.
\end{displaymath}
Secondly, it is known (see e.g., Chapter 15 of \cite{IR}) that, for any positive integers $m,N$, we have
\begin{equation}
\label{eq:power_sum_by_Bernoulli_polynomial}
\sum_{k=1}^{N} k^m = \frac{ 1 }{ m+1 } \bigl( B_{m+1}(N+1) - B_{m+1} \bigr) \enspace.
\end{equation}
Finally, we use the following property in the argument below (see e.g., Chapter 15 of \cite{IR}):

\begin{proposition}
\label{prop:multiplication_theorem}
For integers $m \geq 1$ and $n \geq 0$, we have
\begin{displaymath}
B_n(mx) = m^{n-1} \sum_{k=0}^{m-1} B_n\left( x + \frac{ k }{ m } \right) \enspace.
\end{displaymath}
\end{proposition}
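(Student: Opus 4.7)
The plan is to prove the identity by exponential generating functions. Recall that the Bernoulli polynomials are characterized by
\begin{displaymath}
\frac{t\, e^{xt}}{e^t - 1} = \sum_{n \geq 0} B_n(x)\, \frac{t^n}{n!} \enspace,
\end{displaymath}
which is equivalent to the definition $B_m(x) = \sum_{s=0}^{m} \binom{m}{s} B_{m-s} x^s$ in \eqref{eq:definition_of_Bernoulli_polynomial} combined with the generating function $t/(e^t - 1) = \sum_{m \geq 0} B_m t^m/m!$ for the Bernoulli numbers. The idea is to sum this generating function over the $m$ shifts $x \mapsto x + k/m$ for $k = 0,1,\dots,m-1$, then recognize the resulting expression as a rescaled generating function evaluated at $mx$, and finally compare coefficients of $t^n$.

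Concretely, I would first compute, using the geometric series $\sum_{k=0}^{m-1} r^k = (r^m - 1)/(r - 1)$ with $r = e^{t/m}$,
\begin{displaymath}
\sum_{k=0}^{m-1} \frac{t\, e^{(x + k/m)t}}{e^t - 1}
= \frac{t\, e^{xt}}{e^t - 1} \sum_{k=0}^{m-1} \bigl( e^{t/m} \bigr)^{k}
= \frac{t\, e^{xt}}{e^t - 1} \cdot \frac{e^t - 1}{e^{t/m} - 1}
= \frac{t\, e^{xt}}{e^{t/m} - 1} \enspace.
\end{displaymath}
Then, substituting $s := t/m$ (so that $t = ms$ and $xt = (mx)s$), the right-hand side becomes
\begin{displaymath}
\frac{ms\, e^{(mx)s}}{e^s - 1}
= m \sum_{n \geq 0} B_n(mx)\, \frac{s^n}{n!}
= \sum_{n \geq 0} \frac{B_n(mx)}{m^{n-1}} \cdot \frac{t^n}{n!} \enspace.
\end{displaymath}

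Finally, expanding the original left-hand side directly as $\sum_{k=0}^{m-1} \sum_{n \geq 0} B_n(x + k/m)\, t^n/n!$ and comparing the coefficients of $t^n/n!$ with the last display yields
\begin{displaymath}
\sum_{k=0}^{m-1} B_n\!\left( x + \frac{k}{m} \right) = \frac{B_n(mx)}{m^{n-1}} \enspace,
\end{displaymath}
which is the claimed identity after rearranging. The entire argument takes place in the ring of formal power series over $\mathbb{Q}[x]$, so no convergence issue arises; the $n = 0$ case is handled uniformly (both sides equal $1$ after interpreting the factor $m^{-1}$ in $\mathbb{Q}$). I do not anticipate a serious obstacle: the proof is essentially a one-line manipulation of generating functions, and the only care needed is the bookkeeping of the rescaling $s = t/m$ when re-expressing the result as a power series in $t$.
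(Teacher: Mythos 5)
Your proof is correct: the generating-function computation, the geometric-sum telescoping of $e^t-1$ against $e^{t/m}-1$, and the rescaling $s=t/m$ are all valid as formal power series manipulations over $\mathbb{Q}[x]$, and comparing coefficients of $t^n/n!$ gives exactly the stated identity (Raabe's multiplication theorem). Note that the paper does not prove this proposition at all --- it is quoted as a known fact with a reference to Ireland--Rosen --- so there is no in-paper argument to compare against; your argument is the standard textbook proof and would serve as a complete, self-contained justification.
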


\begin{proof}
[Proof of Theorem \ref{thm:carry_for_p-ary_multiplication}]
By Lemmas \ref{lem:carry_for_p-ary_multiplication__step1} and \ref{lem:carry_for_p-ary_multiplication__step2}, the remaining task is to show that the polynomial $\Psi(t) = \sum_{i=1}^{p-2} \beta_i t^i$ specified in Lemma \ref{lem:carry_for_p-ary_multiplication__step1} satisfies that $\beta_i = \left( B_{p-1-i} / (p-1-i) \right)^{\langle p \rangle}$ for every index $i$, and to show the relation $\Psi(1) = (w_p)^{\langle p \rangle}$ at the last of the statement.
We use the expression of $\psi_1(x,y)$ as in \eqref{eq:carry_for_p-ary_multiplication__expression_by_Psi} which has been proven in Lemma \ref{lem:carry_for_p-ary_multiplication__step1}.

Let $\xi$ be a primitive root modulo $p$.
Then for each index $1 \leq i \leq p-2$, the coefficient of $x^{i+1}$ in $\psi_1(x,\xi) = \xi x( \Psi(\xi x) - \Psi(x) - \Psi(\xi) + \Psi(1) )$ is $\beta_i \xi(\xi^i - 1)$.
On the other hand, for each integer $0 \leq k \leq \xi_{\mathbb{Z}}-1$, we have $\psi_1(x,\xi) = k$ if $\lceil kp / \xi_{\mathbb{Z}} \rceil \leq x_{\mathbb{Z}} \leq \lceil (k+1)p / \xi_{\mathbb{Z}} \rceil - 1$.
Therefore, we have
\begin{displaymath}
\psi_1(x,\xi) = \sum_{k=1}^{\xi_{\mathbb{Z}}-1} \sum_{z = \lceil kp / \xi_{\mathbb{Z}} \rceil}^{ \lceil (k+1)p / \xi_{\mathbb{Z}} \rceil - 1} k \cdot \chi[x = z]
= \sum_{k=1}^{\xi_{\mathbb{Z}}-1} \sum_{z = \lceil kp / \xi_{\mathbb{Z}} \rceil}^{ \lceil (k+1)p / \xi_{\mathbb{Z}} \rceil - 1} k \cdot (1 - (x-z)^{p-1}) \enspace.
\end{displaymath}
The coefficient (in $\mathbb{F}_p$) of $x^{i+1}$ in the right-hand side is
\begin{displaymath}
- \sum_{k=1}^{\xi_{\mathbb{Z}}-1} \sum_{z = \lceil kp / \xi_{\mathbb{Z}} \rceil}^{ \lceil (k+1)p / \xi_{\mathbb{Z}} \rceil - 1} k \binom{p-1}{i+1} (-z)^{p-i-2}
= - \sum_{k=1}^{\xi_{\mathbb{Z}}-1} \sum_{z = \lceil kp / \xi_{\mathbb{Z}} \rceil}^{ \lceil (k+1)p / \xi_{\mathbb{Z}} \rceil - 1} k z^{p-i-2}
\end{displaymath}
where we used the fact $\binom{p-1}{i+1} \equiv (-1)^{i+1} \pmod{p}$ (note that now $(-1)^{p-1} = 1$).
By the argument above, we have
\begin{displaymath}
\beta_i \xi (\xi^i - 1)
= - \sum_{k=1}^{\xi_{\mathbb{Z}}-1} \sum_{z = \lceil kp / \xi_{\mathbb{Z}} \rceil}^{ \lceil (k+1)p / \xi_{\mathbb{Z}} \rceil - 1} k z^{p-i-2} \enspace.
\end{displaymath}
For the right-hand side, we have
\begin{displaymath}
\begin{split}
\sum_{k=1}^{\xi_{\mathbb{Z}}-1} \sum_{z = \lceil kp / \xi_{\mathbb{Z}} \rceil}^{ \lceil (k+1)p / \xi_{\mathbb{Z}} \rceil - 1} k z^{p-i-2}
&= \sum_{k=1}^{\xi_{\mathbb{Z}}-1} k \left( \sum_{z = 1}^{ \lceil (k+1)p / \xi_{\mathbb{Z}} \rceil - 1} z^{p-i-2} - \sum_{z = 1}^{ \lceil kp / \xi_{\mathbb{Z}} \rceil - 1} z^{p-i-2} \right) \\
&= (\xi - 1) \sum_{z = 1}^{p-1} z^{p-i-2} - \sum_{k=1}^{\xi_{\mathbb{Z}} - 1} \sum_{z = 1}^{ \lceil kp / \xi_{\mathbb{Z}} \rceil - 1} z^{p-i-2} \enspace.
\end{split}
\end{displaymath}
To compute the first term of the right-hand side, we have the following equality in $\mathbb{F}_p$:
\begin{equation}
\label{eq:power_sum_in_prime_field}
\sum_{z \in \mathbb{F}_p \setminus \{0\}} z^j
= \sum_{\ell=0}^{p-2} (\xi^{\ell})^j
= \sum_{\ell=0}^{p-2} (\xi^j)^{\ell}
=
\begin{cases}
\displaystyle \frac{ (\xi^j)^{p-1} - 1 }{ \xi^j - 1 } = 0 & \mbox{(for $1 \leq j \leq p-2$)} \\
p-1 = -1 & \mbox{(for $j = 0$ and $j = p-1$)}
\end{cases}
\end{equation}
where we used the fact that $\xi^j \neq 1$ for $1 \leq j \leq p-2$ and $(\xi^j)^{p-1} = 1$ (by Fermat's Little Theorem).
Therefore, we have
\begin{equation}
\label{eq:proof_thm:carry_for_p-ary_multiplication__intermediate_1}
\beta_i \xi (\xi^i - 1) = \chi[i = p-2] \cdot (\xi-1) + \sum_{k=1}^{\xi_{\mathbb{Z}} - 1} \sum_{z = 1}^{ \lceil kp / \xi_{\mathbb{Z}} \rceil - 1} z^{p-i-2} \enspace.
\end{equation}

For the case $1 \leq i \leq p-3$, by applying the fact \eqref{eq:power_sum_by_Bernoulli_polynomial}, we have
\begin{displaymath}
\begin{split}
\left( \sum_{k=1}^{\xi_{\mathbb{Z}} - 1} \sum_{z = 1}^{ \lceil kp / \xi_{\mathbb{Z}} \rceil - 1} z^{p-i-2} \right)^{\langle p \rangle}
&= \left( \sum_{k=1}^{\xi_{\mathbb{Z}} - 1} \frac{ 1 }{ p-1-i } \left( B_{p-1-i}\left( \left\lceil \frac{ kp }{ \xi_{\mathbb{Z}} } \right\rceil \right) - B_{p-1-i} \right) \right)^{\langle p \rangle} \\
&= \left( \frac{ 1 }{ p-1-i } \right)^{\langle p \rangle} \left( \sum_{k=1}^{\xi_{\mathbb{Z}} - 1} B_{p-1-i}\left( \left\lceil \frac{ kp }{ \xi_{\mathbb{Z}} } \right\rceil \right)^{\langle p \rangle} - (\xi - 1) (B_{p-1-i})^{\langle p \rangle} \right) \enspace.
\end{split}
\end{displaymath}
For each index $1 \leq k \leq \xi_{\mathbb{Z}} - 1$, let $\delta_k$ denote the remainder (in the range $[\xi_{\mathbb{Z}} - 1]$) of $kp$ modulo $\xi_{\mathbb{Z}}$.
Then, since $\xi_{\mathbb{Z}}$ is coprime to $p$, $\delta_1$ is a generator of the additive cyclic group $\mathbb{Z}/\xi_{\mathbb{Z}} \mathbb{Z}$.
This implies that the $\delta_k$ are all distinct and $\{\delta_1,\delta_2,\dots,\delta_{\xi_{\mathbb{Z}} - 1}\} = \{1,2,\dots,\xi_{\mathbb{Z}} - 1\}$.
Moreover, for each index $1 \leq k \leq \xi_{\mathbb{Z}} - 1$, we have $\lceil kp / \xi_{\mathbb{Z}} \rceil = kp / \xi_{\mathbb{Z}} + ( \xi_{\mathbb{Z}} - \delta_k ) / \xi_{\mathbb{Z}}$ by the definition of $\delta_k$, therefore
\begin{equation}
\label{eq:proof_thm:carry_for_p-ary_multiplication__intermediate_2}
\left\lceil \frac{ kp }{ \xi_{\mathbb{Z}} } \right\rceil^{\langle p \rangle}
= \left( \frac{ kp }{ \xi_{\mathbb{Z}} } + \frac{ \xi_{\mathbb{Z}} - \delta_k }{ \xi_{\mathbb{Z}} } \right)^{\langle p \rangle}
= \left( \frac{ \xi_{\mathbb{Z}} - \delta_k }{ \xi_{\mathbb{Z}} } \right)^{\langle p \rangle} \enspace.
\end{equation}
This implies that
\begin{displaymath}
\begin{split}
\sum_{k=1}^{\xi_{\mathbb{Z}} - 1} B_{p-1-i}\left( \left\lceil \frac{ kp }{ \xi_{\mathbb{Z}} } \right\rceil \right)^{\langle p \rangle}
&= \sum_{k=1}^{\xi_{\mathbb{Z}} - 1} B_{p-1-i}\left( \frac{ \xi_{\mathbb{Z}} - \delta_k }{ \xi_{\mathbb{Z}} } \right)^{\langle p \rangle} \\
&= \sum_{k=1}^{\xi_{\mathbb{Z}} - 1} B_{p-1-i}\left( \frac{ \xi_{\mathbb{Z}} - k }{ \xi_{\mathbb{Z}} } \right)^{\langle p \rangle}
= \sum_{k=1}^{\xi_{\mathbb{Z}} - 1} B_{p-1-i}\left( \frac{ k }{ \xi_{\mathbb{Z}} } \right)^{\langle p \rangle} \enspace,
\end{split}
\end{displaymath}
therefore
\begin{displaymath}
\sum_{k=1}^{\xi_{\mathbb{Z}} - 1} B_{p-1-i}\left( \left\lceil \frac{ kp }{ \xi_{\mathbb{Z}} } \right\rceil \right)^{\langle p \rangle} - (\xi - 1) (B_{p-1-i})^{\langle p \rangle}
= \sum_{k=0}^{\xi_{\mathbb{Z}} - 1} B_{p-1-i}\left( \frac{ k }{ \xi_{\mathbb{Z}} } \right)^{\langle p \rangle} - \xi (B_{p-1-i})^{\langle p \rangle} \enspace.
\end{displaymath}
Moreover, by setting $x = 0$, $m = \xi_{\mathbb{Z}}$ and $n = p-1-i$ in Proposition \ref{prop:multiplication_theorem}, it follows that
\begin{displaymath}
B_{p-1-i} = (\xi_{\mathbb{Z}})^{p-2-i} \sum_{k=0}^{\xi_{\mathbb{Z}}-1} B_{p-1-i}\left( \frac{ k }{ \xi_{\mathbb{Z}} } \right) \enspace,
\end{displaymath}
therefore (since $\xi^{p-1} = 1$ in $\mathbb{F}_p$)
\begin{displaymath}
\sum_{k=0}^{\xi_{\mathbb{Z}}-1} B_{p-1-i}\left( \frac{ k }{ \xi_{\mathbb{Z}} } \right)^{\langle p \rangle}
= \xi^{i+1} (B_{p-1-i})^{\langle p \rangle} \enspace.
\end{displaymath}
Summarizing, the right-hand side of \eqref{eq:proof_thm:carry_for_p-ary_multiplication__intermediate_1} is equal to
\begin{displaymath}
\left( \frac{ 1 }{ p-1-i } \right)^{\langle p \rangle} \left( \xi^{i+1} (B_{p-1-i})^{\langle p \rangle} - \xi (B_{p-1-i})^{\langle p \rangle} \right)
= \xi(\xi^i - 1) \left( \frac{ B_{p-1-i} }{ p-1-i } \right)^{\langle p \rangle} \enspace,
\end{displaymath}
therefore (since $\xi \neq 0$ and $\xi^i \neq 1$ by the choice of $\xi$) we have $\beta_i = \left( B_{p-1-i} / (p-1-i) \right)^{\langle p \rangle}$ as desired.

On the other hand, for the case $i = p-2$, we have
\begin{displaymath}
\left( \xi - 1 + \sum_{k=1}^{\xi_{\mathbb{Z}} - 1} \sum_{z = 1}^{ \lceil kp / \xi_{\mathbb{Z}} \rceil - 1} z^{p-i-2} \right)^{\langle p \rangle}
= \xi - 1 + \sum_{k=1}^{\xi_{\mathbb{Z}} - 1} \left( \left\lceil \frac{ kp }{ \xi_{\mathbb{Z}} } \right\rceil - 1 \right)^{\langle p \rangle}
= \sum_{k=1}^{\xi_{\mathbb{Z}} - 1} \left( \frac{ \xi_{\mathbb{Z}} - \delta_k }{ \xi_{\mathbb{Z}} } \right)^{\langle p \rangle}
\end{displaymath}
where we used the property \eqref{eq:proof_thm:carry_for_p-ary_multiplication__intermediate_2}.
Since $\{\delta_1,\delta_2,\dots,\delta_{\xi_{\mathbb{Z}}-1}\} = \{1,2,\dots,\xi_{\mathbb{Z}}-1\}$ as shown above, we have
\begin{displaymath}
\sum_{k=1}^{\xi_{\mathbb{Z}} - 1} \left( \frac{ \xi_{\mathbb{Z}} - \delta_k }{ \xi_{\mathbb{Z}} } \right)^{\langle p \rangle}
= \left( \sum_{k=1}^{\xi_{\mathbb{Z}} - 1} \frac{ k }{ \xi_{\mathbb{Z}} } \right)^{\langle p \rangle}
= \left( \frac{ \xi_{\mathbb{Z}} - 1 }{ 2 } \right)^{\langle p \rangle} \enspace.
\end{displaymath}
Hence, by \eqref{eq:proof_thm:carry_for_p-ary_multiplication__intermediate_1} and the fact $\xi^{p-1} = 1$, we have
\begin{displaymath}
\beta_{p-2} (1 - \xi) = \left( \frac{ \xi_{\mathbb{Z}} - 1 }{ 2 } \right)^{\langle p \rangle} \enspace,
\end{displaymath}
therefore, since $\xi \neq 1$ and $B_1 = -1/2$, we have $\beta_{p-2} = (-1/2)^{\langle p \rangle} = B_1{}^{\langle p \rangle}$, as desired.
Summarizing, the equality \eqref{eq:carry_for_p-ary_multiplication__Psi} is now proven.

Finally, we show that $\Psi(1) = (w_p)^{\langle p \rangle}$.
By using the relation \eqref{eq:carry_for_p-ary_multiplication__expression_by_Psi} with $n = p$, for any $x \in \mathbb{F}_p \setminus \{0\}$, we have (in $\mathbb{F}_p$)
\begin{displaymath}
\psi_1(\underbrace{x,\dots,x}_{p})
= x^p ( \Psi(x^p) - p \cdot \Psi(x) + (p-1) \Psi(1) )
= x ( \Psi(x) - \Psi(1) ) \enspace.
\end{displaymath}
This implies that
\begin{displaymath}
(x_{\mathbb{Z}})^p
\equiv (x^p)_{\mathbb{Z}} \Zplus \psi_1(\underbrace{x,\dots,x}_{p})_{\mathbb{Z}} \Ztimes p
\equiv x_{\mathbb{Z}} \Zplus x_{\mathbb{Z}} \Ztimes (\Psi(x) - \Psi(1))_{\mathbb{Z}} \Ztimes p \pmod{p^2} \enspace,
\end{displaymath}
therefore
\begin{equation}
\label{eq:rem:relation_to_group_cohomology__Fermat_quotient}
\Psi(x) - \Psi(1) = \left( \frac{ (x_{\mathbb{Z}})^{p-1} - 1 }{ p } \right)^{\langle p \rangle} = q_p(x_{\mathbb{Z}})^{\langle p \rangle}
\end{equation}
where $q_p(x) = (x^{p-1} - 1)/p$ denotes the Fermat quotient.
We use the following relation between the Fermat quotient and Wilson's quotient \cite{Ler}:
\begin{displaymath}
\sum_{a=1}^{p-1} q_p(a) \equiv w_p \pmod{p} \enspace.
\end{displaymath}
By this relation, we have
\begin{displaymath}
\begin{split}
w_p \equiv \sum_{x=1}^{p-1} q_p(x)
\equiv \sum_{x=1}^{p-1} (\Psi(x) - \Psi(1))
= \sum_{i=1}^{p-2} \beta_i \sum_{x=1}^{p-1} x^i - (p-1) \Psi(1)
\equiv 0 + \Psi(1) = \Psi(1) \pmod{p}
\end{split}
\end{displaymath}
as desired, where we used the equality \eqref{eq:power_sum_in_prime_field}.
This completes the proof of Theorem \ref{thm:carry_for_p-ary_multiplication}.
\end{proof}

We note that the minimal polynomial expression of a general function $(\mathbb{F}_p)^n \to \mathbb{F}_p$ consists of $p^n$ monomials in the worst case.
In contrast, the polynomial expression of $\psi_1$ given above consists of only 
$(n+1)(p-1)/2 + 1$ monomials, which is significantly fewer than the worst-case number $p^n$ of monomials.

\begin{remark}
\label{rem:relation_to_group_cohomology}
The expression \eqref{eq:carry_for_p-ary_multiplication__expression_by_Psi} of $\psi_1$ in terms of the auxiliary function $\Psi$ and a \lq\lq meaning'' of $\Psi$ can be interpreted from a more algebraic viewpoint.
See the Appendix below for the detailed observation.
\end{remark}

\begin{example}
\label{ex:carry_for_p-ary_multiplication__2_is_primitive}
We compute the polynomials $\Psi(t)$ and $\psi_1(x,y)$ for some small odd primes $p$.
For the case $p = 3$, $\Psi(t)$ has only the highest term $\Psi(t) = (p-1)/2 \cdot t^{p-2} = t$, therefore
\begin{displaymath}
\psi_1(x,y) = xy(xy - x - y + 1) = x(x-1)y(y-1) \mbox{ for } p = 3 \enspace.
\end{displaymath}
For the other $p$, we quote from A000367 and A002445 of \cite{OEIS} some values of Bernoulli numbers (Table \ref{tab:Bernoulli-number}), and from A002068 of \cite{OEIS} some values of Wilson's quotients; the polynomials $\Psi(t)$ are then calculated by using Theorem \ref{thm:carry_for_p-ary_multiplication} and Tables \ref{tab:Bernoulli-number} and \ref{tab:Wilson's_quotient}.

\begin{table}
\centering
\caption{Some Bernoulli numbers $B_{\ell}$; note that $B_{\ell} = 0$ for odd indices $\ell > 1$}
\label{tab:Bernoulli-number}
\begin{tabular}{|c|c|c|c|c|c|c|c|c|c|c|}
$\ell$ & $0$ & $1$ & $2$ & $4$ & $6$ & $8$ & $10$ & $12$ & $14$ & $16$ \\ \hline
$B_{\ell}$ & $1$ & $-1/2$ & $1/6$ & $-1/30$ & $1/42$ & $-1/30$ & $5/66$ & $-691/2730$ & $7/6$ & $-3617/510$ \\
$B_{\ell} / \ell$ & & $-1/2$ & $1/12$ & $-1/120$ & $1/252$ & $-1/240$ & $1/132$ & $-691/32760$ & $1/12$ & $-3617/8160$
\end{tabular}
\end{table}
\begin{table}
\centering
\caption{Some Wilson's quotients $w_p$ modulo primes $p$; recall that $\Psi(1) \equiv w_p \pmod{p}$}
\label{tab:Wilson's_quotient}
\begin{tabular}{|c|c|c|c|c|c|c|c|c|c|c|}
$p$ & $3$ & $5$ & $7$ & $11$ & $13$ & $17$ & $19$ & $23$ & $29$ & $31$ \\ \hline
$w_p \bmod p$ & $1$ & $0$ & $5$ & $1$ & $0$ & $5$ & $2$ & $8$ & $18$ & $19$
\end{tabular}
\end{table}

For $p = 5$, we have
\begin{displaymath}
\Psi(t)
= \left( 2t^3 + \frac{ 1 }{ 12 } t^2 \right)^{\langle 5 \rangle}
= 2t^3 + 3t^2 \,,\, \Psi(1) = 0 \,,\,
\psi_1(x,y) = xy( \Psi(xy) - \Psi(x) - \Psi(y) ) \enspace.
\end{displaymath}

For $p = 7$, we have
\begin{displaymath}
\begin{split}
\Psi(t)
&= \left( 3t^5 + \frac{ 1 }{ 12 } t^4 - \frac{ 1 }{ 120 } t^2 \right)^{\langle 7 \rangle}
= 3t^5 + 3t^4 - t^2 \enspace,\\
\Psi(1) &= 5 \,,\,
\psi_1(x,y) = xy( \Psi(xy) - \Psi(x) - \Psi(y) + 5 ) \enspace.
\end{split}
\end{displaymath}

For $p = 11$, we have
\begin{displaymath}
\begin{split}
\Psi(t)
&= \left( 5t^9 + \frac{ 1 }{ 12 } t^8 - \frac{ 1 }{ 120 } t^6 + \frac{ 1 }{ 252 } t^4 - \frac{ 1 }{ 240 } t^2 \right)^{\langle 11 \rangle}
= 5t^9 + t^8 + t^6 - t^4 - 5t^2 \enspace,\\
\Psi(1) &= 1 \,,\,
\psi_1(x,y) = xy( \Psi(xy) - \Psi(x) - \Psi(y) + 1 ) \enspace.
\end{split}
\end{displaymath}

For $p = 13$, we have
\begin{displaymath}
\begin{split}
\Psi(t)
&= \left( 6t^{11} + \frac{ 1 }{ 12 } t^{10} - \frac{ 1 }{ 120 } t^8 + \frac{ 1 }{ 252 } t^6 - \frac{ 1 }{ 240 } t^4 + \frac{ 1 }{ 132 } t^2 \right)^{\langle 13 \rangle}
 = 6t^{11} - t^{10} + 4t^8 - 5t^6 + 2t^4 - 6t^2 \enspace,\\
\Psi(1) &= 0 \,,\,
\psi_1(x,y) = xy( \Psi(xy) - \Psi(x) - \Psi(y) ) \enspace.
\end{split}
\end{displaymath}

For $p = 17$, we have
\begin{displaymath}
\begin{split}
\Psi(t)
&= \left( 8t^{15} + \frac{ 1 }{ 12 } t^{14} - \frac{ 1 }{ 120 } t^{12} + \frac{ 1 }{ 252 } t^{10} - \frac{ 1 }{ 240 } t^8 + \frac{ 1 }{ 132 } t^6 - \frac{ 691 }{ 32760 } t^4 + \frac{ 1 }{ 12 } t^2 \right)^{\langle 17 \rangle} \\
&= 8t^{15} - 7t^{14} - t^{12} - 6t^{10} + 8t^8 + 4t^6 + 6t^4 - 7t^2 \enspace,\\
\Psi(1) &= 5 \,,\,
\psi_1(x,y) = xy( \Psi(xy) - \Psi(x) - \Psi(y) + 5 ) \enspace.
\end{split}
\end{displaymath}

For $p = 19$, we have
\begin{displaymath}
\begin{split}
\Psi(t)
&= \left( 9t^{17} + \frac{ 1 }{ 12 } t^{16} - \frac{ 1 }{ 120 } t^{14} + \frac{ 1 }{ 252 } t^{12} - \frac{ 1 }{ 240 } t^{10} + \frac{ 1 }{ 132 } t^8 - \frac{ 691 }{ 32760 } t^6 + \frac{ 1 }{ 12 } t^4 - \frac{ 3617 }{ 8160 } t^2 \right)^{\langle 19 \rangle} \\
&= 9t^{17} + 8t^{16} + 3t^{14} + 4t^{12} - 8t^{10} - t^8 + 3t^6 + 8t^4 - 5t^2 \enspace,\\
\Psi(1) &= 2 \,,\,
\psi_1(x,y) = xy( \Psi(xy) - \Psi(x) - \Psi(y) + 2 ) \enspace.
\end{split}
\end{displaymath}
\end{example}

\subsection{Multiplication of $p$-ary Integers Based on Polynomials}
\label{subsec:polynomial_for_multiplication__algorithm}

Here we show two algorithms for multiplication of two $p$-ary integers $a_h = (a_{h,m_h} \cdots a_{h,1}a_{h,0})_p$, $h = 1,2$, based on the result of Section \ref{subsec:polynomial_for_multiplication__carry}, where, as above, each digit $a_{h,i}$ of $a_h$ is represented by an element of $\mathbb{F}_p$.
The advantage of the first algorithm is that we need the carry function $\varphi_1$ to the next digit for addition but do not need the carry functions $\varphi_k$ to higher digits $k \geq 2$ which are more complicated.
On the other hand, the advantage of the second algorithm is that it seems more appropriate for parallel computation.
As in Section \ref{subsec:polynomial_for_multiplication__carry}, we assume $p > 2$.

For our first algorithm, note that the product $c = a_1a_2$ can be expressed by $m_1 + m_2 + 2$ digits; $c = (c_{m_1 + m_2 + 1} \cdots c_1c_0)_p$, $c_i \in \mathbb{F}_p$.
Then the digits of $c$ are calculated by the algorithm shown in Figure \ref{fig:algorithm_multiplication_1}, where $\gamma$ means an auxiliary variable for the carry at each digit to the next digit.
We note that, for each indices $i,j$, we have
\begin{displaymath}
(a_{1,i})_{\mathbb{Z}} \Ztimes (a_{2,j})_{\mathbb{Z}} \Zplus (c_{i+j})_{\mathbb{Z}} \Zplus \gamma_{\mathbb{Z}} \leq (p-1)^2 + 2(p-1) = p^2 - 1 \enspace,
\end{displaymath}
therefore the value appearing in updating the $(i+j)$-th digit can be expressed by two digits and the polynomials $\varphi_k$ for $k \geq 2$ are not needed.
Now it follows that the algorithm calculates $c = a_1a_2$ correctly.

\begin{figure}
\centering
\caption{First algorithm for multiplication of two $p$-ary integers based on polynomials}\medskip
\label{fig:algorithm_multiplication_1}
\fbox{%
\begin{minipage}{400pt}
\noindent
\texttt{Input:} $a_h = (a_{h,m_h} \cdots a_{h,1}a_{h,0})_p$ ($h \in \{1,2\}$, $a_{h,i} \in \mathbb{F}_p$)\medskip

\noindent
\texttt{Set} $c_0 \leftarrow a_{1,0}a_{2,0}$, $\gamma \leftarrow \psi_1(a_{1,0},a_{2,0})$ \\
\texttt{For} $i = 1,\dots,m_1$ \texttt{Do:} \\
\hspace*{1em} \texttt{Set} $c_i \leftarrow a_{1,i}a_{2,0} + \gamma$ \\
\hspace*{1em} \texttt{Update} $\gamma$ \texttt{by} $\gamma \leftarrow \psi_1(a_{1,i},a_{2,0}) + \varphi_1(a_{1,i}a_{2,0},\gamma)$ \\
\texttt{End Do} \\
\texttt{Set} $c_{1,m_1+1} \leftarrow \gamma$ \\
\texttt{For} $j = 1,\dots,m_2$ \texttt{Do:} \\
\hspace*{1em} \texttt{Update} $c_j$ \texttt{and} $\gamma$ \texttt{by} $(c_j,\gamma) \leftarrow \bigl( a_{1,0}a_{2,j} + c_j, \psi_1(a_{1,0},a_{2,j}) + \varphi_1(a_{1,0}a_{2,j},c_j) \bigr)$ \\
\hspace*{1em} \texttt{For} $i = 1,\dots,m_1-1$ \texttt{Do:} \\
\hspace*{2em} \texttt{Update} $c_{i+j}$ \texttt{and} $\gamma$ \texttt{by} $(c_{i+j},\gamma) \leftarrow \bigl( a_{1,i}a_{2,j} + c_{i+j} + \gamma, \psi_1(a_{1,i},a_{2,j}) + \varphi_1(a_{1,i}a_{2,j},c_{i+j},\gamma) \bigr)$ \\
\hspace*{1em} \texttt{End Do} \\
\hspace*{1em} \texttt{Update} $c_{m_1+j}$ \texttt{by} $c_{m_1+j} \leftarrow a_{1,m_1}a_{2,j} + c_{m_1+j} + \gamma$ \\
\hspace*{1em} \texttt{Set} $c_{m_1+j+1} \leftarrow \psi_1(a_{1,m_1},a_{2,j}) + \varphi_1(a_{1,m_1}a_{2,j},c_{m_1+j},\gamma)$ \\
\texttt{End Do} \\
\texttt{Output} $c = (c_{m_1+m_2+1} \cdots c_1c_0)_p$
\end{minipage}
}
\end{figure}
On the other hand, our second algorithm to calculate the digits of $c = a_1 a_2$ is shown in Figure \ref{fig:algorithm_multiplication_2}.
Here we note that, for the latter loop for $i = 0,1,\dots$, since we have $n(p-1) < p^n$ for any integer $n \geq 1$ and any prime $p$, the total number of elements in the lists $A_k$ with $k \geq i$ is strictly decreasing when $i$ is incremented during the loop.
This implies that the algorithm always stops within a finite number of steps, therefore the algorithm calculates $c = a_1 a_2$ correctly.

\begin{figure}
\centering
\caption{Second algorithm for multiplication of two $p$-ary integers based on polynomials}\medskip
\label{fig:algorithm_multiplication_2}
\fbox{%
\begin{minipage}{400pt}
\noindent
\texttt{Input:} $a_h = (a_{h,m_h} \cdots a_{h,1}a_{h,0})_p$ ($h \in \{1,2\}$, $a_{h,i} \in \mathbb{F}_p$)\medskip

\noindent
\texttt{Initialize the lists $A_0,A_1,A_2,\dots$ to be empty} \\
\texttt{For} $i = 0,\dots,m_1$ \texttt{Do:} \\
\hspace*{1em} \texttt{For} $j = 0,\dots,m_2$ \texttt{Do:} \\
\hspace*{2em} \texttt{Append} $a_{1,i}a_{2,j}$ \texttt{to the list} $A_{i+j}$ \\
\hspace*{2em} \texttt{Append} $\psi_1(a_{1,i},a_{2,j})$ \texttt{to the list} $A_{i+j+1}$ \\
\hspace*{1em} \texttt{End Do} \\
\texttt{End Do} \\
\texttt{For} $i = 0,1,\dots$ \texttt{Do} \\
\hspace*{1em} \texttt{If} $A_i$ \texttt{is empty, then output} $c = (c_{i-1} \dots c_1c_0)_p$ \texttt{and stop} \\
\hspace*{1em} \texttt{Enumerate the elements of} $A_i$ \texttt{as} $\alpha_1,\dots,\alpha_n$ \\
\hspace*{1em} \texttt{Set} $c_i \leftarrow \alpha_1 + \cdots + \alpha_n$ \\
\hspace*{1em} \texttt{For} $j = 1,\dots,\max\{k \in \mathbb{Z} \mid n(p-1) \geq p^k\}$ \texttt{Do:} \\
\hspace*{2em} \texttt{Append} $\varphi_j(\alpha_1,\dots,\alpha_n)$ \texttt{to the list} $A_{i+j}$ \\
\hspace*{1em} \texttt{End Do} \\
\texttt{End Do}
\end{minipage}
}
\end{figure}

\appendix

\section*{Appendix: Algebraic Observation for the Proof of Theorem \ref{thm:carry_for_p-ary_multiplication}}

In this appendix, we revisit our proof of Theorem \ref{thm:carry_for_p-ary_multiplication} from algebraic viewpoints, as mentioned in Remark \ref{rem:relation_to_group_cohomology}.

Let $p$ be an odd prime.
First, we consider the following exact sequence
\begin{displaymath}
1 \to 1 + p\mathbb{Z}/p^2\mathbb{Z} \hookrightarrow (\mathbb{Z}/p^2{\mathbb{Z}})^{\times} \overset{\bmod p}{\to} (\mathbb{F}_p)^{\times} \to 1
\end{displaymath}
and a section $\widetilde{\cdot} \colon (\mathbb{F}_p)^{\times} \ni x \mapsto \widetilde{x} \in (\mathbb{Z}/p^2{\mathbb{Z}})^{\times}$ which is a composition of the map $a \mapsto a_{\mathbb{Z}}$ followed by the natural projection $\mathbb{Z} \twoheadrightarrow \mathbb{Z}/p^2\mathbb{Z}$.
Note that the group action of $(\mathbb{F}_p)^{\times}$ on $1 + p\mathbb{Z}/p^2\mathbb{Z}$ associated to the group extension above is trivial, since $(\mathbb{Z}/p^2{\mathbb{Z}})^{\times}$ is Abelian.
Then, by the general theory of cohomology of groups, the map $(\mathbb{F}_p)^{\times} \times (\mathbb{F}_p)^{\times} \to (\mathbb{Z}/p^2\mathbb{Z})^{\times}$, $(x,y) \mapsto \widetilde{x}\,\widetilde{y} / \widetilde{xy}$, has values in the subgroup $1 + p\mathbb{Z}/p^2\mathbb{Z}$ and gives a $2$-cocycle, hence an element of $H^2((\mathbb{F}_p)^{\times},1 + p\mathbb{Z}/p^2\mathbb{Z})$.
Since $x_{\mathbb{Z}}y_{\mathbb{Z}} = (xy)_{\mathbb{Z}} + \psi_1(x,y)_{\mathbb{Z}} \cdot p$, we have $\widetilde{x}\,\widetilde{y} / \widetilde{xy} = 1 + \left( \psi_1(x,y)_{\mathbb{Z}} / (xy)_{\mathbb{Z}} \right)^{\langle p^2 \rangle} \cdot p$.
By mapping this via a group isomorphism $1 + p\mathbb{Z}/p^2\mathbb{Z} \overset{\sim}{\to} \mathbb{Z}/p\mathbb{Z}$, $a \mapsto (a-1)/p$, we obtain a $2$-cocycle $(\mathbb{F}_p)^{\times} \times (\mathbb{F}_p)^{\times} \to \mathbb{Z}/p\mathbb{Z}$ given by
\begin{equation}
\label{eq:rem:relation_to_group_cohomology__2-cocycle}
(\mathbb{F}_p)^{\times} \times (\mathbb{F}_p)^{\times} \ni (x,y) \mapsto \left( \frac{ \psi_1(x,y)_{\mathbb{Z}} }{ (xy)_{\mathbb{Z}} } \right)^{\langle p \rangle} = \frac{ \psi_1(x,y) }{ xy } \in \mathbb{Z}/p\mathbb{Z} \enspace.
\end{equation}
The property \eqref{eq:psi_almost_2-cocycle} for $x,y,z \in (\mathbb{F}_p)^{\times}$ is now derived by the definition of $2$-cocycles (for the trivial group action).
We note that the property \eqref{eq:psi_almost_2-cocycle} for the remaining case where some of $x,y,z$ is zero follows immediately from the meaning of $\psi_1$.
Moreover, since $(\mathbb{F}_p)^{\times}$ and $\mathbb{Z}/p\mathbb{Z}$ have coprime orders, we have $H^2((\mathbb{F}_p)^{\times},\mathbb{Z}/p\mathbb{Z}) = 0$ by Schur--Zassenhaus Theorem.
In particular, the $2$-cocycle \eqref{eq:rem:relation_to_group_cohomology__2-cocycle} gives a zero element of $H^2((\mathbb{F}_p)^{\times},\mathbb{Z}/p\mathbb{Z})$ and hence is a coboundary (for the trivial group action), namely,
\begin{equation}
\label{eq:rem:relation_to_group_cohomology__2-cocycle_is_coboundary}
\frac{ \psi_1(x,y) }{ xy } = \overline{\Psi}(x) + \overline{\Psi}(y) - \overline{\Psi}(xy)
\end{equation}
for a function $\overline{\Psi} \colon (\mathbb{F}_p)^{\times} \to \mathbb{Z}/p\mathbb{Z}$.
Now we have $\overline{\Psi}(1) = \psi_1(1,1) = 0$.
Then the expression \eqref{eq:carry_for_p-ary_multiplication__expression_by_Psi} of $\psi_1$ for $n = 2$ is deduced by extending the domain of the function $\overline{\Psi}$ from $(\mathbb{F}_p)^{\times}$ to $\mathbb{F}_p$ and normalizing it in such a way that $\Psi(t) = \overline{\Psi}(0) - \overline{\Psi}(t)$, i.e., $\Psi(1) = \overline{\Psi}(0)$ and $\overline{\Psi}(t) = \Psi(1) - \Psi(t)$.
We note that such a function $(\mathbb{F}_p)^{\times} \to \mathbb{Z}/p\mathbb{Z}$ satisfying \eqref{eq:rem:relation_to_group_cohomology__2-cocycle_is_coboundary} is uniquely determined.
Indeed, the $1$-cocycles $(\mathbb{F}_p)^{\times} \to \mathbb{Z}/p\mathbb{Z}$ are group homomorphisms since $(\mathbb{F}_p)^{\times}$ acts trivially on $\mathbb{Z}/p\mathbb{Z}$, while we have $\mathrm{Hom}((\mathbb{F}_p)^{\times},\mathbb{Z}/p\mathbb{Z}) = 0$ since $(\mathbb{F}_p)^{\times}$ and $\mathbb{Z}/p\mathbb{Z}$ have coprime orders.
Therefore, the difference of any two such functions, which is a $1$-cocycle, is the zero map as mentioned above.

To investigate the function $\overline{\Psi}$ further, we consider another section $[\cdot] \colon (\mathbb{F}_p)^{\times} \to (\mathbb{Z}/p^2\mathbb{Z})^{\times}$ defined by $[x] = (\widetilde{x})^p$ (note that $[x] \equiv x^p \equiv x \pmod{p}$ by Fermat's Little Theorem).
This is a group homomorphism (hence, it is the Teichm\"{u}ller lift of the projection $(\mathbb{Z}/p^2\mathbb{Z})^{\times} \to (\mathbb{F}_p)^{\times}$), since $\widetilde{x}\,\widetilde{y} \equiv \widetilde{xy} + \psi_1(x,y) \cdot p \pmod{p^2}$ and hence $(\widetilde{x}\,\widetilde{y})^p \equiv (\widetilde{xy})^p \pmod{p^2}$ by the binomial theorem.
We consider the difference $\widetilde{x} [x]^{-1} \in 1 + p\mathbb{Z}/p^2\mathbb{Z}$ of the two sections $\widetilde{\cdot},[\cdot]$.
By mapping this via the isomorphism $1 + p\mathbb{Z}/p^2\mathbb{Z} \overset{\sim}{\to} \mathbb{Z}/p\mathbb{Z}$ above, we obtain the map
\begin{displaymath}
\alpha \colon (\mathbb{F}_p)^{\times} \to \mathbb{Z}/p\mathbb{Z} \,,\, \alpha(x) = \left( \frac{ \widetilde{x} [x]^{-1} - 1 }{ p } \right)^{\langle p \rangle} \enspace.
\end{displaymath}
Now, by the homomorphic property of $[\cdot]$, for any $x,y \in (\mathbb{F}_p)^{\times}$, we have
\begin{displaymath}
\begin{split}
\psi_1(x,y)
&= \left( \frac{ \widetilde{x}\,\widetilde{y} - \widetilde{xy} }{ p } \right)^{\langle p \rangle} \\
&= \left( [xy] \cdot \frac{ (\widetilde{x}\,\widetilde{y} [x]^{-1}[y]^{-1} - 1) - (\widetilde{xy} [xy]^{-1} - 1) }{ p } \right)^{\langle p \rangle} \\
&= \left( [xy] \cdot \frac{ (\widetilde{x} [x]^{-1} - 1)(\widetilde{y} [y]^{-1} - 1) + (\widetilde{x} [x]^{-1} - 1) + (\widetilde{y} [y]^{-1} - 1 ) - (\widetilde{xy} [xy]^{-1} - 1) }{ p } \right)^{\langle p \rangle} \enspace.
\end{split}
\end{displaymath}
Since $\widetilde{x} [x]^{-1} - 1 \equiv \widetilde{y} [y]^{-1} - 1 \equiv 0 \pmod{p}$, the rightmost side is equal to
\begin{displaymath}
\left( [xy] \cdot \frac{ (\widetilde{x} [x]^{-1} - 1) + (\widetilde{y} [y]^{-1} - 1 ) - (\widetilde{xy} [xy]^{-1} - 1) }{ p } \right)^{\langle p \rangle}
= xy (\alpha(x) + \alpha(y) - \alpha(xy)) \enspace,
\end{displaymath}
therefore $\psi_1(x,y) / (xy) = \alpha(x) + \alpha(y) - \alpha(xy)$.
Hence we have
\begin{displaymath}
\overline{\Psi}(x) = \alpha(x) = \left( \frac{ \widetilde{x} [x]^{-1} - 1 }{ p } \right)^{\langle p \rangle} \mbox{\quad for $x \in (\mathbb{F}_p)^{\times}$}
\end{displaymath}
by the uniqueness of $\overline{\Psi}$ mentioned above.
This gives a \lq\lq meaning'' of the auxiliary function $\overline{\Psi}$ (and its normalized version $\Psi$) as the difference of the two sections $\widetilde{\cdot}$ and $[\cdot]$ in the group extension above.

For any $a \in \mathbb{F}_p \setminus \{0,-1\}$, we have $\widetilde{a+1} = \widetilde{a} + 1$ and
\begin{displaymath}
\begin{split}
(a+1) \overline{\Psi}(a+1)
&= \left( \frac{ \widetilde{a+1} - [a+1] }{ p } \right)^{\langle p \rangle} \\
&= \left( \frac{ (\widetilde{a} - [a]) + ([a] + 1 - [a+1]) }{ p } \right)^{\langle p \rangle}
= a \overline{\Psi}(a) + \left( \frac{ [a] + 1 - [a+1] }{ p } \right)^{\langle p \rangle} \enspace,
\end{split}
\end{displaymath}
therefore
\begin{equation}
\label{eq:rem:relation_to_group_cohomology__differential_equation}
(a+1) \overline{\Psi}(a+1) - a \overline{\Psi}(a)
= \left( \frac{ [a] + 1 - [a+1] }{ p } \right)^{\langle p \rangle} \enspace.
\end{equation}
Intuitively, the differential equation \eqref{eq:rem:relation_to_group_cohomology__differential_equation} involving the power function $[a] = (\widetilde{a})^p$ can be seen as the source of Bernoulli numbers appearing in the expression of $\psi_1$, since Bernoulli numbers have close connections to power sums (cf., \eqref{eq:power_sum_by_Bernoulli_polynomial}).
Now for $x \in (\mathbb{F}_p)^{\times}$, by summing up \eqref{eq:rem:relation_to_group_cohomology__differential_equation} for $a \in 1,2,\dots,x-1$ and by using the fact $\overline{\Psi}(1) = 0$, we have
\begin{displaymath}
x \overline{\Psi}(x) = \left( \frac{ x - [x] }{p} \right)^{\langle p \rangle} = -x \cdot q_p(x)^{\langle p \rangle}
\end{displaymath}
where $q_p(x) = (x^{p-1} - 1)/p$ denotes the Fermat quotient.
Hence, the relation \eqref{eq:rem:relation_to_group_cohomology__Fermat_quotient} of the auxiliary function $\Psi$ to the Fermat quotient can be derived from the \lq\lq meaning'' of $\Psi$ itself mentioned above, without using the original function $\psi_1$.


\begin{thebibliography}{99}

\bibitem{BPP00}
J.~Boyar, R.~Peralta, D.~Pochuev:
On the Multiplicative Complexity of Boolean Functions over the Basis (cap, +, 1).
Theor.\ Comput.\ Sci.\ 235(1) (2000) 43--57







\bibitem{Gen09}
C.~Gentry:
Fully Homomorphic Encryption Using Ideal Lattices.
In: Proceedings of STOC 2009, ACM, 2009, pp.169--178

\bibitem{Gla}
J.~W.~L.~Glaisher:
A Congruence Theorem Relating to Eulerian Numbers and Other Coefficients.
Proc.\ London Math.\ Soc.\ 32(1) (1900) 171--198




\bibitem{IR}
K.~Ireland, M.~Rosen:
A Classical Introduction to Modern Number Theory (second edition).
Graduate Texts in Mathematics vol.84, Springer (1998)

\bibitem{Ler}
M.~Lerch:
Zur Theorie des Fermatschen Quotienten $\frac{a^{p - 1} - 1}{p} = q(a)$.
Math.\ Ann.\ 60(4) (1905) 471--490

\bibitem{Luc}
E.~Lucas:
Th\'{e}orie des Fonctions Num\'{e}riques Simplement P\'{e}riodiques.
Amer.\ J.\ Math.\ 1(3) (1878) 197--240


\bibitem{NK15}
K.~Nuida, K.~Kurosawa:
(Batch) Fully Homomorphic Encryption over Integers for Non-Binary Message Spaces.
In: Proceedings of EUROCRYPT 2015 (Part I), Lecture Notes in Computer Science (LNCS) vol.~9056, Springer, 2015, pp.537--555

\bibitem{OEIS}
The Online Encyclopedia of Integer Sequences, \url{http://oeis.org/}

\bibitem{Sta}
R.~P.~Stanley:
Enumerative Combinatorics, Volume I (first edition).
Cambridge University Press (1997)

\end{thebibliography}
\end{document}